\documentclass[11pt]{article}

\usepackage{ascmac}
\usepackage{amsthm}
\usepackage{amsmath}
\usepackage{amssymb}
\usepackage{amsfonts}
\usepackage{bm}

\pagestyle{myheadings}
\markboth{T. Onozuka}{$a$-points of $\zeta^{(k)}(s)$}

\frenchspacing

\textwidth=13.5cm
\textheight=23cm
\parindent=16pt
\oddsidemargin=1.5cm
\evensidemargin=1.5cm
\topmargin=-0.5cm

\newtheorem{thm}{Theorem}[section]
\newtheorem{lem}[thm]{Lemma}

\makeatletter
\@addtoreset{equation}{section}

\makeatother

\title{On the $a$-points of the derivatives of the Riemann zeta function}
\author{TOMOKAZU ONOZUKA}

\begin{document}
\baselineskip=17pt
\date{}
\maketitle
\renewcommand{\thefootnote}{}
\footnote{2010 \emph{Mathematics Subject Classification}: Primary 11M06.}
\footnote{\emph{Key words and phrases}: the Riemann zeta function, the Riemann-von Mangoldt formula, a-point, derivative.}
\renewcommand{\thefootnote}{\arabic{footnote}}
\setcounter{footnote}{0}
\begin{abstract}
We prove three results on the $a$-points of the derivatives of the Riemann zeta function. The first result is a formula of the Riemann-von Mangoldt type; we estimate the number of the $a$-points of the derivatives of the Riemann zeta function. The second result is on certain exponential sum involving $a$-points. The third result is an analogue of the zero density theorem. We count the $a$-points of the derivatives of the Riemann zeta function in $1/2-(\log\log T)^2/\log T<\Re s<1/2+(\log\log T)^2/\log T$.
\end{abstract}
\section{Introduction}

The Riemann zeta function $\zeta(s)$ is one of the most important functions in number theory, and its importance comes from its relation to the distribution of primes. The theory of the Riemann zeta function has a famous conjecture, which is the Riemann hypothesis. The Riemann hypothesis states that all of the nontrivial zeros of the Riemann zeta function are located on the critical line, $\Re s=1/2$. Hence it is important to study the zeros of the Riemann zeta function. In 1905, von Mangoldt proved the Riemann-von Mangoldt formula
$$
N(T)=\frac{T}{2\pi}\log\frac{T}{2\pi}-\frac{T}{2\pi}+O(\log T),
$$
where $N(T)$ is the number of zeros of the Riemann zeta function counted with multiplicity in the region $0<\Im s<T$. As a generalization of this formula, in 1913, Landau \cite{bo} estimated the number of the $a$-points of the Riemann zeta function, where we define the $a$-point of the function $f(s)$ as a root of $f(s)=a$. Especially, $\rho_a=\beta_a+i\gamma_a$ denotes the $a$-points of $\zeta(s)$. For $a\in\mathbb{C}$, he proved the following;
\begin{align}
\label{T1}N(a;1,T):=\sum_{1<\gamma_a<T}1=\begin{cases}
  \displaystyle\frac{T}{2\pi}\log\frac{T}{2\pi}-\frac{T}{2\pi}+O(\log T)&(a\neq1),\\
\\
  \displaystyle\frac{T}{2\pi}\log\frac{T}{4\pi}-\frac{T}{2\pi}+O(\log T)&(a=1).
\end{cases}
\end{align}

Furthermore, Landau \cite{la} proved another generalization of the Riemann-von Mangoldt formula. For $x>1$, he proved
\begin{align*}
\sum_{0<\gamma_0<T}x^{\rho_0}=-\Lambda(x)\frac{T}{2\pi}+O(\log T),
\end{align*}
where $\Lambda(x)$ is the von Mangoldt $\Lambda$ function if $x$ is an integer, and otherwise $\Lambda(x)=0$. If $x=1$, the left-hand side is just the number of zeros of the Riemann zeta function counted with multiplicity in the region $0<\Im s<T$, so this is a generalization of the Riemann-von Mangoldt formula.  This formula was also generalized by Steuding \cite{st} who proved that for any positive real number $x\neq1$ we have
\begin{align}
\label{T2}\sum_{0<\gamma_a<T}x^{\rho_a}=\left(\alpha(x)-x\Lambda\left(\frac{1}{x}\right)\right)\frac{T}{2\pi}+O(T^{\frac{1}{2}+\varepsilon}),
\end{align}
where $\alpha(x)$ is the coefficient of the series
\begin{align}
\frac{\zeta'(s)}{\zeta(s)-a}=\sum_{d>0}\frac{\alpha(d)}{d^s}\label{alp}
\end{align}
for $x\in\mathbb{Z}$ and $\alpha(x)=0$ for $x\notin\mathbb{Z}$ if $a\neq1$. If $a=1$,  $\alpha(x)$ is also the coefficient of (\ref{alp}) for $2^nx\in\mathbb{Z}$ with some $n\in\mathbb{N}$. If $2^nx\notin\mathbb{Z}$ for any $n\in\mathbb{N}$,  $\alpha(x)=0$.

On the other hand, in 1914, Bohr and Landau \cite{bl} showed that almost all zeros of the Riemann zeta function lie near the critical line. Landau \cite{bo} generalized this result under the Riemann hypothesis. He proved that almost all $a$-points of the Riemann zeta function lie  near the critical line under the Riemann hypothesis. Later, Levinson \cite{le} proved it unconditionally. Precisely, he proved that for sufficiently large $T$, $T^{1/2}\leq U \leq T$ and $a\in\mathbb{C}$, we have
\begin{align}
N^{(1)}(a;T,T+U):=\sum_{\substack{T<\gamma_a<T+U\\ \beta_a>1/2+(\log\log T)^2/\log T}}1=O\left(\frac{U\log T}{\log\log T}\right),\label{T3}\\
N^{(2)}(a;T,T+U):=\sum_{\substack{T<\gamma_a<T+U\\ \beta_a<1/2-(\log\log T)^2/\log T}}1=O\left(\frac{U\log T}{\log\log T}\right)\label{T4}
\end{align}
and
\begin{align}
\notag N^{(3)}(a;T,T+U)&:=\sum_{\substack{T<\gamma_a<T+U\\ 1/2-(\log\log T)^2/\log T<\beta_a<1/2+(\log\log T)^2/\log T}}1\\
&\qquad\qquad\qquad\qquad\qquad\quad=\frac{U}{2\pi}\log T+O\left(\frac{U\log T}{\log\log T}\right).\label{T5}
\end{align}

These results have been generalized to the case of the zeros of the derivatives of the Riemann zeta function. We define $\zeta^{(k)}(s)$ as the $k$th derivative of the Riemann zeta function, and $\rho_a^{(k)}=\beta_a^{(k)}+i\gamma_a^{(k)}$ denotes the $a$-point of $\zeta^{(k)}(s)$. In 1970, Berndt \cite{be} showed a formula of the Riemann-von Mangoldt type for $\zeta^{(k)}(s)$ with $k\geq1$;
\begin{align}
N_k(0;0,T):=\sum_{0<\gamma_0^{(k)}<T}1=\frac{T}{2\pi}\log\frac{T}{4\pi}-\frac{T}{2\pi}+O(\log T).\label{berndt}
\end{align}
Equations (\ref{T3})-(\ref{T5}) are also generalized to the zeros of  $\zeta^{(k)}(s)$ by Levinson and Montgomery \cite[Theorem 2]{lm}. They proved
\begin{align*}
N_k^{(1)}(0;0,T)+N_k^{(2)}(0;0,T):=\sum_{\substack{0<\gamma_0^{(k)}<T\\ |\beta_0^{(k)}-1/2|\geq\delta}}1\ll\delta^{-1}T\log\log T
\end{align*}
for $k\geq1$.

In this paper, we generalize these results. We generalize these estimations to the $a$-points of the derivatives of the Riemann zeta function.  In Section 2, we prove some lemmas and fundamental results on the $a$-points of $\zeta^{(k)}(s)$. Precisely, we find the "$a$-point free" region and the trivial $a$-points. In Section 3, we prove a generalization of Landau's result (\ref{T1}) and Berndt's result (\ref{berndt}).
\begin{thm}\label{1}
For any positive integer $k$ and any complex number $a\neq0$, we have
\begin{align*}
N_k(a;1,T):=\sum_{1<\gamma_a^{(k)}<T}1=\frac{T}{2\pi}\log\frac{T}{2\pi}-\frac{T}{2\pi}+O(\log T).
\end{align*}
\end{thm}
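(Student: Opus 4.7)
The plan is to count the $a$-points via the argument principle applied to $\zeta^{(k)}(s)-a$ on a positively oriented rectangle $R$ with vertices $A+i$, $B+i$, $B+iT$, $A+iT$. Since $a\neq 0$ and $\zeta^{(k)}(\sigma+it)\to 0$ as $\sigma\to+\infty$ for $k\geq 1$, the ``$a$-point free'' region established in Section~2 lets us fix $B>1$ so that $|\zeta^{(k)}(s)-a|\geq |a|/2$ for $\Re s\geq B$. Using Section~2's description of the trivial $a$-points, we fix a bounded $A<0$ so that the line $\Re s=A$ lies to the right of all trivial $a$-points with imaginary part in $[1,T]$ and meets no $a$-point. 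After shifting $T$ by $O(1)$ so that no $a$-point lies on the top edge (this alters $N_k(a;1,T)$ by $O(\log T)$), we have $N_k(a;1,T)=\frac{1}{2\pi}\Delta_R\arg(\zeta^{(k)}(s)-a)$.

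The side contributions split as follows. On the right edge $\zeta^{(k)}(s)-a$ stays within $|a|/2$ of $-a$, so its change of argument is $O(1)$; the bottom edge has bounded length, contributing $O(1)$ as well. For the top edge $\Im s=T$, the classical Backlund-type argument applies: the change of argument is bounded by $\pi$ times the number of sign changes of $\Re(\zeta^{(k)}(\sigma+iT)-a)$ on $A\leq\sigma\leq B$, and Jensen's formula on a disk centered at $B+iT$, combined with a convexity bound $\zeta^{(k)}(s)\ll T^{C}$ in the critical strip, yields a contribution of $O(\log T)$.

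The main term arises from the left edge $\Re s=A$. Here I differentiate the functional equation $\zeta(s)=\chi(s)\zeta(1-s)$, with $\chi(s)=2^{s}\pi^{s-1}\sin(\pi s/2)\Gamma(1-s)$, by Leibniz to obtain
\begin{equation*}
\zeta^{(k)}(s)=\sum_{j=0}^{k}\binom{k}{j}(-1)^{k-j}\chi^{(j)}(s)\zeta^{(k-j)}(1-s).
\end{equation*}
Factoring out $\chi(s)$ and using $\chi^{(j)}(s)/\chi(s)\sim(-\log(t/2\pi))^{j}$ from Stirling, the dominant contribution is $(-1)^{k}\chi(s)(\log(t/2\pi))^{k}\zeta(1-s)$, so one writes $\zeta^{(k)}(s)-a=\chi(s)\,G(s)$, where $G(s)$ is bounded and bounded away from $0$ on $\Re s=A$, provided $|A|$ is taken large enough that $|\chi(s)|$ dominates $|a|$ uniformly in $t\in[1,T]$. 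The argument of $G$ varies only by $O(\log T)$ at worst, since $(\log(t/2\pi))^{k}\zeta(1-s)$ is essentially real and positive for $t>2\pi$. Applying Stirling's formula to $\chi$ yields $\arg\chi(A+iT)=-T\log(T/2\pi)+T+O(1)$, so the change of argument along the left edge equals $T\log(T/2\pi)-T+O(\log T)$, and division by $2\pi$ produces the claimed formula.

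The principal technical obstacle is the uniform verification that $G(s)$ is bounded away from zero along the whole left edge (which requires choosing $|A|$ large so that the Stirling estimate $|\chi(A+it)|\asymp (t/2\pi)^{1/2-A}e^{A-1}$ dominates both $|a|$ and the sub-leading terms of the Leibniz expansion) and that the top-edge bound is genuinely $O(\log T)$ (which requires the Jensen/convexity step together with the shift of $T$). Both steps follow the templates of Landau's proof of~(\ref{T1}) and Berndt's proof of~(\ref{berndt}), adapted to accommodate the polynomial-in-$\log t$ factors introduced by differentiating $\chi$.
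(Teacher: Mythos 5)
Your proposal is correct in outline and reaches the theorem by a route that genuinely differs from the paper's in both technical steps. The paper likewise applies the argument principle to $\zeta^{(k)}(s)-a$ on a rectangle $[E_{1k}'(a),E_{2k}'(a)]\times[1,T]$ and treats the right and bottom edges exactly as you do; but for the top edge it integrates the logarithmic derivative $\zeta^{(k+1)}/(\zeta^{(k)}-a)$ and uses the partial-fraction expansion of Lemma \ref{LEM3.2} together with the unit-strip count of Lemma \ref{LEM3.1} and a semicircle deformation, whereas you use the classical Backlund device (sign changes of a real part, Jensen's formula, and a polynomial bound for $\zeta^{(k)}$); and for the left edge the paper expands $\zeta^{(k+1)}/(\zeta^{(k)}-a)$ as $\zeta^{(k+1)}/\zeta^{(k)}$ times a geometric series, controls the tail by the lower bound (\ref{hyoka}) coming from Lemma \ref{lem1}, and quotes Berndt's evaluation of the remaining integral, whereas you track $\arg(\zeta^{(k)}(s)-a)$ directly by factoring out $\chi(s)$ from the Leibniz-differentiated functional equation and applying Stirling. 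Your Leibniz computation is essentially a re-derivation of Spira's formula, i.e.\ of Lemma \ref{lem1}, so both proofs rest on the same analytic inputs (smallness of $\zeta^{(k)}$ far to the right, domination by the Gamma factor far to the left); your version is more self-contained and avoids the Hadamard factorization (Lemma \ref{LEM4}) and Lemma \ref{LEM3.2}, while the paper's log-derivative formulation produces those lemmas, which it then reuses in Theorems \ref{2} and \ref{3}.

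Two points in your write-up need patching. First, in the Backlund step the Jensen disk must be centered where the auxiliary analytic function is bounded below in modulus; with sign changes of $\Re(\zeta^{(k)}(\sigma+iT)-a)$ the center value is $\Re\zeta^{(k)}(B+iT)-\Re a$, which is not bounded away from $0$ when $a$ is purely imaginary. Count instead sign changes of $\Re\{e^{i\phi}(\zeta^{(k)}(\sigma+iT)-a)\}$ with $e^{i\phi}(-a)$ real and positive; a constant unimodular rotation does not affect the argument variation, and the center value is then close to $|a|$. (Note also that the $O(\log T)$ cost of shifting $T$ is precisely the unit-strip bound of Lemma \ref{LEM3.1}, which the same Jensen argument furnishes.) Second, on the left edge the factor $\log(t/2\pi)$ vanishes at $t=2\pi$, so as written your $G$ is not bounded away from zero uniformly down to $t=1$; the uniform statement replaces $\log(t/2\pi)$ by $\log(1-s)$, whose modulus is large for all $t\geq1$ once $|A|$ is large --- exactly the form in Lemma \ref{lem1}. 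With that correction $G$ stays in a fixed half-plane for $t$ beyond a bounded height, the bounded initial segment contributes $O(1)$, and the left-edge argument change is indeed $T\log(T/2\pi)-T+O(\log T)$, giving the theorem.
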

The reason why the summation does not count the $a$-points with $0<\gamma_a^{(k)}\leq1$ is that there exist many trivial $a$-points. See Theorem \ref{thm3}.

In Section 4, we show a generalization of Steuding's result (\ref{T2}).
\begin{thm}\label{2}
Let $x>1$. For any positive integer $k$ and any complex number $a$, we have
\begin{align*}
&\sum_{1<\gamma_a^{(k)}<T}x^{\rho_a^{(k)}}\\
&=\begin{cases}
  \displaystyle\frac{T}{2\pi}\sum_{\substack{l\geq0\\n_0,\ldots,n_l\geq2\\x=n_0\cdots n_l}}\frac{(-1)^{k(l+1)}}{a^{l+1}}(\log n_0)^{k+1}(\log n_1\cdots\log n_l)^k+O(\log T)&(a\neq0),\\
  \displaystyle\frac{T}{2\pi}\sum_{\substack{l\geq0\\n_0\geq2\\n_1,\ldots,n_l\geq3\\x=n_0\cdots n_l/2^{l+1}}}\left(\frac{-1}{(\log 2)^k}\right)^{l+1}(\log n_0)^{k+1}(\log n_1\cdots\log n_l)^k+O(\log T)&(a=0).
\end{cases}
\end{align*}
If $a\neq0$, the summation of the right-hand side is zero for $x\notin\mathbb{Z}$, and if $a=0$ and $2^nx\notin\mathbb{Z}$ for any $n\in\mathbb{N}$, the summation of the right-hand side is zero.
\end{thm}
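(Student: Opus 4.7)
My plan is to adapt the contour-integration method used for Theorem \ref{1} and Steuding's proof of \eqref{T2}, inserting the weight $x^s$. By the argument principle applied to $\zeta^{(k)}(s) - a$,
\begin{align*}
\sum_{1 < \gamma_a^{(k)} < T} x^{\rho_a^{(k)}} = \frac{1}{2\pi i} \oint_{\mathcal{R}} x^s \frac{\zeta^{(k+1)}(s)}{\zeta^{(k)}(s) - a}\, ds,
\end{align*}
where $\mathcal{R}$ is a positively oriented rectangle with vertices $-U + i,\ b + i,\ b + iT,\ -U + iT$. I choose $b$ large enough that the Dirichlet expansion derived below converges absolutely on $\Re s = b$, and $-U$ in the $a$-point-free region from Section 2. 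Shifting $T$ by $O(1)$ so that no $a$-point lies on $\Im s = T$ costs at most $O(\log T)$ by Theorem \ref{1}.

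The next step is a geometric-series expansion of $\zeta^{(k+1)}(s)/(\zeta^{(k)}(s) - a)$. For $a \neq 0$, using $\zeta^{(k)}(s) = (-1)^k \sum_{n \geq 2}(\log n)^k/n^s$, I write
\begin{align*}
\frac{\zeta^{(k+1)}(s)}{\zeta^{(k)}(s) - a} = -\frac{\zeta^{(k+1)}(s)}{a}\sum_{l \geq 0} \left(\frac{\zeta^{(k)}(s)}{a}\right)^l,
\end{align*}
and multiplying out produces a Dirichlet series $\sum_d c(d)/d^s$ whose coefficient $c(d)$ equals exactly the quantity inside the sum of the theorem with $x$ replaced by $d$. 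For $a = 0$, I factor out the $n = 2$ term by writing $\zeta^{(k)}(s) = (-1)^k(\log 2)^k\, 2^{-s}(1 + g(s))$ with $g(s) = \sum_{n \geq 3}((\log n)/\log 2)^k (2/n)^s$, expand $1/(1 + g)$ geometrically, and multiply by $\zeta^{(k+1)}(s)$. This yields a series supported on $d = n_0 \cdots n_l / 2^{l+1}$ with $n_0 \geq 2$ and $n_j \geq 3$ for $j \geq 1$, matching the second case of the theorem.

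The main term arises from the right-hand vertical segment: for each Dirichlet term $c(d)/d^s$,
\begin{align*}
\frac{1}{2\pi i}\int_{b + i}^{b + iT} c(d) (x/d)^s\, ds = \begin{cases} c(d)(T-1)/(2\pi), & x = d, \\ O\bigl(c(d)(x/d)^b/|\log(x/d)|\bigr), & x \neq d, \end{cases}
\end{align*}
so the diagonal $x = d$ yields $(T/2\pi)$ times the coefficient, matching the claimed main term (and vanishing whenever $x$ admits no representation $n_0 \cdots n_l$ or $n_0 \cdots n_l/2^{l+1}$); off-diagonal contributions are $O(1)$ by absolute convergence. For the other three sides: the bottom $[-U+i, b+i]$ contributes $O(1)$; on the left, the integrand is bounded in the $a$-point-free region while $|x^s| = x^{-U}$ is tiny, so this contribution is negligible for $U$ large. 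The principal obstacle is the top segment at height $T$, where I need $\zeta^{(k+1)}(s)/(\zeta^{(k)}(s) - a) \ll \log T$; this is handled by the familiar device of averaging over a short vertical interval of admissible heights near $T$ and extracting a good value using a mean-value estimate together with the Riemann-von Mangoldt bound of Theorem \ref{1}, exactly as in the proof of Theorem \ref{1} itself. Combining all four sides gives the stated main term with total error $O(\log T)$.
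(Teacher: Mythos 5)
Your overall architecture is the same as the paper's: you integrate $x^s\zeta^{(k+1)}(s)/(\zeta^{(k)}(s)-a)$ around a rectangle, expand the logarithmic derivative as a Dirichlet series by the same geometric-series manipulation (this is exactly Lemma \ref{LEM4.1}), extract the main term from the diagonal $d=x$ on the right-hand edge, and kill the left and bottom edges using the $a$-point-free region and the asymptotic of Lemma \ref{lem1}. All of that matches the paper and is fine.

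The genuine gap is your treatment of the horizontal segment at height $T$, which is the only delicate edge. You assert the pointwise bound $\zeta^{(k+1)}(s)/(\zeta^{(k)}(s)-a)\ll\log T$ on that segment, to be obtained "by averaging over admissible heights, exactly as in the proof of Theorem \ref{1}". First, the proof of Theorem \ref{1} does not do this: it never proves a pointwise bound at a good height. Second, no such pointwise bound follows from a good-height choice: by Lemma \ref{LEM3.1} there are $O(\log T)$ many $a$-points with $|\gamma_a^{(k)}-T|<1$, and the best one can guarantee by pigeonholing is that each is at distance $\gg1/\log T$ from the segment, which gives only $\sum_{|\gamma_a^{(k)}-T|<1}|s-\rho_a^{(k)}|^{-1}\ll\log^2 T$; feeding that into your contour yields an error $O(\log^2T)$, weaker than the stated $O(\log T)$. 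The paper's device (used both for $J_3$ in Theorem \ref{1} and for $K_4$ here) is different: by Lemma \ref{LEM3.2} one writes
\begin{align*}
\frac{\zeta^{(k+1)}(s)}{\zeta^{(k)}(s)-a}=\sum_{|\gamma_a^{(k)}-T|<1}\frac{1}{s-\rho_a^{(k)}}+O(\log T)
\end{align*}
on the segment, and then integrates each singular term \emph{exactly}, deforming the path of integration onto the unit semicircle about $\rho_a^{(k)}$ (upper or lower according to whether $\gamma_a^{(k)}\leq T$ or $>T$); since $x$ and the abscissas are bounded on the deformed path, each term contributes $O(1)$, and Lemma \ref{LEM3.1} bounds the number of terms by $O(\log T)$. (An averaging argument can also be made to work, but one must average the \emph{integral} over $T'\in[T,T+1]$ via Fubini, noting that the shift of the upper limit changes the sum by $O(\log T)$; averaging cannot produce the pointwise bound you invoke.) With the top edge handled this way, the rest of your sketch goes through and coincides with the paper's proof.
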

Finally, in Section 5, we prove an analogue of Levinson's results (\ref{T3})-(\ref{T5}).
\begin{thm}\label{3}
Let $k$ be a positive integer, $\alpha>1/2$ be a real number and $a$ be a complex number. For sufficiently large $T$ and $T^\alpha\leq U \leq T$, we have
\begin{align*}
N_k^{(1)}(a;T,T+U)&:=\sum_{\substack{T<\gamma_a^{(k)}<T+U\\ \beta_a^{(k)}>1/2+(\log\log T)^2/\log T}}1=O\left(\frac{U\log T}{\log\log T}\right),\\
N_k^{(2)}(a;T,T+U)&:=\sum_{\substack{T<\gamma_a^{(k)}<T+U\\ \beta_a^{(k)}<1/2-(\log\log T)^2/\log T}}1=O\left(\frac{U\log T}{\log\log T}\right)
\end{align*}
and
\begin{align*}
&N_k^{(3)}(a;T,T+U):=\sum_{\substack{T<\gamma_a^{(k)}<T+U\\ 1/2-(\log\log T)^2/\log T\leq\beta_a^{(k)}\leq1/2+(\log\log T)^2/\log T}}1\\
&\qquad\qquad\qquad\qquad\qquad\qquad\qquad\qquad\qquad\qquad=\frac{U}{2\pi}\log T+O\left(\frac{U\log T}{\log\log T}\right).
\end{align*}
\end{thm}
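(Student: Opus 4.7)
My plan is to follow Levinson's proof of (\ref{T3})--(\ref{T5}), replacing $\zeta$ by $\zeta^{(k)}$ throughout. The formula for $N_k^{(3)}$ follows from the two upper bounds $N_k^{(1)}, N_k^{(2)}\ll U\log T/\log\log T$: by Theorem \ref{1} applied at heights $T$ and $T+U$,
\begin{align*}
N_k(a;T,T+U) = \frac{U}{2\pi}\log T + O(U)
\end{align*}
for $U\leq T$, and every $a$-point in $[T,T+U]$ lies in a vertical strip $|\Re s|\leq A$ by the ``$a$-point free region'' established in Section 2, so $N_k(a;T,T+U) = N_k^{(1)}+N_k^{(2)}+N_k^{(3)}$, and the stated asymptotic for $N_k^{(3)}$ follows.

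For $N_k^{(1)}$ I apply Littlewood's lemma to $f(s):=\zeta^{(k)}(s)-a$ on the rectangle $[1/2,A]\times[T,T+U]$:
\begin{align*}
2\pi\sum_{\substack{\rho_a^{(k)}\in R\\ \beta_a^{(k)}\geq 1/2}}(\beta_a^{(k)}-1/2) = \int_T^{T+U}\log|f(1/2+it)|\,dt - \int_T^{T+U}\log|f(A+it)|\,dt + O(\log T).
\end{align*}
The integral on $\sigma=A$ is $O(U)$. To the first I apply Jensen's inequality together with the short-interval mean-square bound $\int_T^{T+U}|\zeta^{(k)}(1/2+it)|^2\,dt\ll U(\log T)^{2k+1}$, valid for $U\geq T^\alpha$, $\alpha>1/2$, by standard approximate-functional-equation methods. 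This yields an upper bound $O(U\log\log T)$ on the whole sum. Since each $a$-point with $\beta_a^{(k)}>1/2+\delta$, $\delta=(\log\log T)^2/\log T$, contributes at least $\delta$, I conclude $N_k^{(1)}\ll U\log\log T/\delta=U\log T/\log\log T$.

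For $N_k^{(2)}$ I apply Littlewood's lemma on the rectangle $[-B,1/2]\times[T,T+U]$ with $B$ large enough that the left edge lies inside the $a$-point-free region. The dominant contribution to $\zeta^{(k)}$ on $\sigma=-B$ comes from differentiating the functional equation $\zeta(s)=\chi(s)\zeta(1-s)$ $k$ times, giving $\chi^{(k)}(s)\zeta(1-s)\asymp(\log|t|)^k(|t|/2\pi)^{1/2+B}$, and hence
\begin{align*}
\int_T^{T+U}\log|\zeta^{(k)}(-B+it)-a|\,dt = (1/2+B)U\log(T/2\pi) + O(U\log\log T).
\end{align*}
Writing $N_R$ for the number of $a$-points in the rectangle and using $\sum(\beta_a^{(k)}+B)=(1/2+B)N_R-\sum(1/2-\beta_a^{(k)})$, the Littlewood identity rearranges to the upper bound
\begin{align*}
\sum(1/2-\beta_a^{(k)}) \leq (1/2+B)\Bigl(N_R - \tfrac{U\log(T/2\pi)}{2\pi}\Bigr) + O(U\log\log T),
\end{align*}
where I have also used Jensen on the integral along $\sigma=1/2$. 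Since $N_R\leq N_k(a;T,T+U)=U\log T/(2\pi)+O(U)$ by Theorem \ref{1}, the bracket is $O(U)$, giving $\sum(1/2-\beta_a^{(k)})\ll U\log\log T$ and hence $N_k^{(2)}\ll U\log T/\log\log T$. The principal technical obstacle is establishing the two analytic inputs with the correct power of $\log T$: the short-interval mean square for $\zeta^{(k)}$ on $\sigma=1/2$, and the precise boundary asymptotic on $\sigma=-B$, both of which must carefully absorb the additional $(\log T)^k$ factors generated by the $k$ derivatives; once these are in place, the Littlewood/Jensen framework reproduces the classical argument verbatim.
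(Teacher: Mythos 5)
Your overall strategy coincides with the paper's: Littlewood's lemma on vertical strips, Jensen's inequality to reduce the $\sigma=1/2$ integral to the logarithm of a mean value, the differentiated functional equation on a far-left line (this is exactly the role of Lemma \ref{lem1}), and Theorem \ref{1} to turn the bounds on $\sum(\beta_a^{(k)}-1/2)$ and $\sum(1/2-\beta_a^{(k)})$ into the stated counts, with $N_k^{(3)}$ obtained by subtraction as in (\ref{Rel}). Your handling of $N_k^{(2)}$ by Littlewood on $[-B,1/2]$ is an algebraic rearrangement of the paper's comparison of $2\pi\sum_{T<\gamma_a^{(k)}<T+U}(\beta_a^{(k)}+b)$ (Lemmas \ref{LEM5.4}, \ref{LEM5.5}) with $\bigl(b+\tfrac12\bigr)N_k(a;T,T+U)$ (Lemma \ref{LEM5.6}); the analytic inputs are the same. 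Two remarks on that part: your left-edge evaluation silently discards the secondary term $k\{(T+U)\log\log(T+U)-T\log\log T\}\asymp kU\log\log T$ into the error, which is harmless only because it has the favorable sign and $O(U\log\log T)$ accuracy suffices; and the $O(\log T)$ bounds for the horizontal argument integrals are not free---the paper proves them via Jensen applied to the auxiliary function $H_{a,T}(s)=\tfrac12\bigl(G_a(s+iT)+G_{\overline a}(s-iT)\bigr)$ together with the polynomial growth bound (\ref{est}); you should say how you get them.

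The one load-bearing step you assert rather than prove is the short-interval mean square $\int_T^{T+U}\bigl|\zeta^{(k)}\bigl(\tfrac12+it\bigr)\bigr|^{2}dt\ll U(\log T)^{2k+1}$ for $T^{\alpha}\le U\le T$, $\alpha>1/2$. For $k\ge1$ this is not a textbook citation: the classical asymptotic is for the full range $[0,T]$, and differencing it at width $U\asymp T^{\alpha}$ with $\alpha$ near $1/2$ would require an error term sharper than what is standard, so a genuine short-interval argument is needed. The paper is built precisely to avoid this point: in Lemma \ref{LEM5.2} it takes the small exponent $v=2/(4k+1)$, writes $\zeta^{(k)}=(\zeta^{(k)}/\zeta)\cdot\zeta$, and applies H\"older with $p=1+1/(4k)$, $q=4k+1$, so that only Ki--Lee's bound $\int_T^{T+U}|(\zeta^{(k)}/\zeta)(\tfrac12+it)|^{1/(2k)}dt\ll U\sqrt{\log T}$ \cite{ki} and the classical second moment of $\zeta$ are used, and no mean value theorem for $\zeta^{(k)}$ itself is required (the case $a=0$ being delegated to \cite{ki} and \cite{lm}). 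Your route is repairable, since after taking logarithms any bound $\ll U(\log T)^{O(1)}$ suffices: for instance, Cauchy's integral formula (or subharmonicity of $|\zeta|^{2}$) on circles of radius $\asymp 1/\log T$ about $\tfrac12+it$ reduces the problem to $\int_T^{T+U}|\zeta(\sigma+it)|^{2}dt\ll U\log T$ uniformly for $|\sigma-\tfrac12|\ll 1/\log T$, which follows from the approximate functional equation and the Montgomery--Vaughan mean value theorem because $U\ge T^{1/2+\varepsilon}$. Until such an argument (or the paper's H\"older/Ki--Lee substitute) is supplied, this is the gap in your proposal; the rest goes through.
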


These three main results imply a result on the uniform distribution on the derivatives of the Riemann zeta function. Actually, in \cite{lee}, Lee, Suriajaya and the author give the result on the uniform distribution of $\{\alpha\gamma_a^{(k)}\}_{\gamma_a^{(k)}>1}$ for all $\alpha\neq0$. (See \cite[THEOREM 1.1]{lee}.) In the proof of this result, all of main results in this paper are necessary, and play an important role.

The proofs of the main results are similar to the proofs of \cite[(22)]{bo}, \cite[Theorem 6]{st} and \cite[Theorem]{le}. The difference between those proofs and our proofs is the existence of the functional equation. In \cite{bo}, \cite{st} and \cite{le}, they studied the Riemann zeta function, and it has the functional equation. On the other hand, in this paper, we study the derivatives of the Riemann zeta function, and it does not have the functional equation. Instead of the functional equation, we prove and apply Lemma \ref{lem1} and Lemma \ref{LEM3.2}.


\section{Lemmas and Fundamental Results}

In this section, we prove some lemmas and fundamental results on the $a$-points for the derivatives of the Riemann zeta function. Hereafter we put $s=\sigma+it$.
\begin{lem}\label{lem1}
Let $k$ be a positive integer. For $c>1$, the following equation holds in the region $\{s\in\mathbb{C}\ |\ \sigma>c,|t|\geq1\}$;
\begin{align*}
\zeta^{(k)}(1-s)&=(-1)^k2(2\pi)^{-s}\Gamma(s)(\log s)^k\cos\frac{\pi s}{2}\ \zeta(s)\left(1+O\left(\frac{1}{|\log s|}\right)\right).\\
\end{align*}
\end{lem}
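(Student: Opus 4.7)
The plan is to differentiate the functional equation
\[
\zeta(1-s)=2(2\pi)^{-s}\Gamma(s)\cos\frac{\pi s}{2}\,\zeta(s)=:F(s)
\]
$k$ times in $s$. The left-hand side contributes $(-1)^k\zeta^{(k)}(1-s)$, so it is enough to prove
\[
\frac{F^{(k)}(s)}{F(s)}=(\log s)^k\left(1+O\!\left(\frac{1}{|\log s|}\right)\right)\qquad(\sigma>c>1,\ |t|\geq1).
\]

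First I compute the logarithmic derivative
\[
\frac{F'(s)}{F(s)}=-\log(2\pi)+\frac{\Gamma'(s)}{\Gamma(s)}-\frac{\pi}{2}\tan\frac{\pi s}{2}+\frac{\zeta'(s)}{\zeta(s)}.
\]
In the region of interest, Stirling gives $\Gamma'(s)/\Gamma(s)=\log s+O(1/|s|)$; the function $\tan(\pi s/2)$ is bounded (it tends to $\mp i$ as $|t|\to\infty$); and $\zeta'/\zeta=O(1)$, because $\zeta(s)$ is bounded below by its Euler product while $\zeta^{(j)}(s)$ is majorised by the absolutely convergent sum $\sum n^{-\sigma}(\log n)^j$. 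Thus $F'/F=\log s+O(1)$, and by the same ingredients every higher derivative $(F'/F)^{(j)}$ with $j\geq1$ is $O(1)$ uniformly in the strip.

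Next, set $g_n:=F^{(n)}/F$. The recurrence $g_{n+1}=g_n'+g_n\cdot(F'/F)$, together with $g_0=1$, shows by induction that $g_n$ equals the $n$-th complete Bell polynomial in $F'/F,(F'/F)',\ldots,(F'/F)^{(n-1)}$. Its leading monomial is $(F'/F)^n$, and every other term carries at least one factor $(F'/F)^{(j)}$ with $j\geq1$, hence contributes only $O((\log|s|)^{n-1})$. Substituting the estimates above,
\[
g_k(s)=\bigl(\log s+O(1)\bigr)^k+O\bigl((\log|s|)^{k-1}\bigr)=(\log s)^k\left(1+O\!\left(\frac{1}{|\log s|}\right)\right),
\]
which, multiplied by $F(s)$ and combined with $\zeta^{(k)}(1-s)=(-1)^kF^{(k)}(s)$, yields the lemma.

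The only real work is organising the error bookkeeping: once the estimates $\Gamma'/\Gamma=\log s+O(1)$, $\tan(\pi s/2)=O(1)$, $\zeta^{(j)}/\zeta=O(1)$ and their higher-derivative analogues are assembled, the Bell-polynomial expansion is mechanical. All error contributions ultimately trace back to the single $O(1)$ perturbation of $\log s$ appearing in $F'/F$, which is precisely what produces the relative error $O(1/|\log s|)$ in the statement.
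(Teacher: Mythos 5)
Your argument is correct, and it reaches the lemma by the same underlying idea as the paper (differentiate the functional equation $k$ times and isolate the term carrying $(\log s)^k$), but with a genuinely different execution. The paper simply quotes Spira's explicit $k$-fold differentiated functional equation, in which the main term is $\Gamma^{(k)}(s)\cos(\pi s/2)\zeta(s)$ and the remainder is $\sum_{j<k}\Gamma^{(j)}(s)R_{jk}(s)$, together with Spira's estimate $\Gamma^{(j)}(s)=\Gamma(s)(\log s)^j\bigl(1+O(1/(s\log s))\bigr)$; the dominance of the main term then gives the relative error $O(1/|\log s|)$. You instead work with $F=2(2\pi)^{-s}\Gamma(s)\cos(\pi s/2)\zeta(s)$ directly, note $F'/F=\log s+O(1)$ (Stirling for $\Gamma'/\Gamma$, boundedness of $\tan(\pi s/2)$ for $|t|\geq1$, and $\zeta'/\zeta=O(1)$ for $\sigma>c>1$), observe that all higher derivatives of $F'/F$ are $O(1)$ there, and then use the recurrence $g_{n+1}=g_n'+g_n F'/F$ (equivalently the complete Bell polynomial expansion of $F^{(k)}/F$) to see that the leading monomial $(F'/F)^k=(\log s)^k+O(|\log s|^{k-1})$ dominates, every other monomial losing at least one power of $\log s$. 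This is self-contained: it replaces both of Spira's cited inputs by elementary bookkeeping, at the cost of a slightly longer verification. The small points your write-up leaves implicit all check out: $|\log s|$ is bounded away from $0$ since $|s|\geq\sqrt{c^2+1}>1$, so the relative-error reformulation is legitimate; $\tan(\pi s/2)$ and its derivatives are bounded for $|t|\geq1$ because $|\cos(\pi s/2)|\geq\sinh(\pi/2)$ there; and $F\neq0$ in the region, so dividing by $F$ is allowed. One could even note that the non-leading Bell monomials are $O\bigl((\log|s|)^{k-2}\bigr)$, one power better than you claim, though your weaker bound already suffices.
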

\begin{proof}
We use the equation \cite[(6)-(7c)]{sp}
\begin{align}
\zeta^{(k)}(1-s)&=(-1)^k2(2\pi)^{-s}\left\{\Gamma^{(k)}(s)\cos\frac{\pi s}{2}\ \zeta(s)+\sum_{j=0}^{k-1}\Gamma^{(j)}(s)R_{jk}(s)\right\},\label{L3}
\end{align}
where
\begin{align*}
R_{jk}(s)&=P_{jk}(s)\cos\frac{\pi s}{2}+Q_{jk}(s)\sin\frac{\pi s}{2},\\
P_{jk}(s)&=\sum_{n=0}^{k}a_{jkn}\zeta^{(n)}(s),\\
Q_{jk}(s)&=\sum_{n=0}^{k}b_{jkn}\zeta^{(n)}(s)
\end{align*}
and $a_{jkn},b_{jkn}$ are constants. By \cite[(10)]{sp}, derivatives of the gamma function can be estimated as
\begin{align}
\Gamma^{(j)}(s)=\Gamma(s)(\log s)^j\left(1+O\left(\frac{1}{s\log s}\right)\right).\label{L1}
\end{align}
By (\ref{L1}), we estimate the first and second terms of (\ref{L3}) in the region $\{s\in\mathbb{C}\ |\ \sigma>c,|t|\geq1\}$ as follows;
\begin{align}
\label{L4}\left|\Gamma^{(k)}(s)\cos\frac{\pi s}{2}\ \zeta(s)\right|&\asymp\left|\Gamma(s)(\log s)^ke^{\pi |t|/2}\right|,\\
\label{L5}\left|\sum_{j=0}^{k-1}\Gamma^{(j)}(s)R_{jk}(s)\right|&\ll\Gamma(s)(\log s)^{k-1}e^{\pi |t|/2},
\end{align}
since in the same region, we have $\zeta(s)\asymp1$ and $\zeta^{(j)}(s)=\sum_{n=2}^{\infty}(-\log n)^j/n^s\ll1$. Hence we obtain Lemma \ref{lem1}.
\end{proof}

By this lemma, we can find the "$a$-point free" region for $\zeta^{(k)}(s)$. When $a=0$, Spira \cite{sp}  found the zero free region for $\zeta^{(k)}(s)$. The next theorem is a generalization of his result.
\begin{thm}\label{thm2}
For any positive integer $k$ and $a\in\mathbb{C}$, there exist real numbers $E_{1k}(a)\leq0$ and $E_{2k}(a)\geq1$ such that $|\zeta^{(k)}(s)|> |a|$ for $\{s\in\mathbb{C}\ |\ \sigma\leq E_{1k}(a),|t|\geq1\}$ and $|\zeta^{(k)}(s)|< |a|$ for  $\{s\in\mathbb{C}\ |\ \sigma\geq E_{2k}(a)\}$. In particular, $\zeta^{(k)}(s)$ has no $a$-points for these two regions.
\end{thm}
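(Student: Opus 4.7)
The plan is to handle the two half-planes by entirely different means: the right region is a straightforward Dirichlet-series estimate, while the left region uses Lemma \ref{lem1} together with Stirling's formula.

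For the right region, I would start from the Dirichlet series
\[
\zeta^{(k)}(s)=\sum_{n\geq 2}\frac{(-\log n)^k}{n^s}\qquad(\sigma>1),
\]
which converges absolutely and is dominated by the $n=2$ term. Thus
\[
|\zeta^{(k)}(s)|\leq \frac{(\log 2)^k}{2^\sigma}+\sum_{n\geq 3}\frac{(\log n)^k}{n^\sigma}=\frac{(\log 2)^k}{2^\sigma}\bigl(1+o(1)\bigr)
\]
as $\sigma\to\infty$, uniformly in $t$. For any $a\neq 0$, choosing $E_{2k}(a)\geq 1$ so large that the right-hand side is $<|a|$ gives $|\zeta^{(k)}(s)|<|a|$ on $\sigma\geq E_{2k}(a)$. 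For $a=0$ one argues instead that the leading term keeps $|\zeta^{(k)}(s)|$ strictly positive, so there are no zeros.

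For the left region I would invoke Lemma \ref{lem1} and reduce the problem to estimating
\[
(2\pi)^{-\sigma}|\Gamma(s)|\,(\log|s|)^k\,\Bigl|\cos\tfrac{\pi s}{2}\Bigr|\,|\zeta(s)|
\]
for $\sigma>c$ with some fixed $c>1$ and $|t|\geq 1$. In this range $|\zeta(s)|\asymp 1$ and $|\cos(\pi s/2)|\asymp e^{\pi|t|/2}$. Stirling gives
\[
|\Gamma(\sigma+it)|=\sqrt{2\pi}\,|s|^{\sigma-1/2}e^{-\sigma-t\arg s}\bigl(1+O(1/|s|)\bigr),
\]
and the crucial identity $\tfrac{\pi|t|}{2}-t\arg s=|t|\arctan(\sigma/|t|)\geq 0$ shows that the exponential factor from $\cos(\pi s/2)$ exactly kills the bad $e^{-t\arg s}$ contribution. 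Combining these,
\[
|\zeta^{(k)}(1-s)|\gtrsim \frac{|s|^{\sigma-1/2}e^{-\sigma}}{(2\pi)^\sigma}(\log|s|)^k\geq \Bigl(\frac{\sigma}{2\pi e}\Bigr)^{\sigma}\sigma^{-1/2}(\log\sigma)^k,
\]
which tends to $\infty$ as $\sigma\to\infty$ uniformly in $|t|\geq 1$. Consequently, for any $a\in\mathbb{C}$ there is $\sigma_0\geq 1$ with $|\zeta^{(k)}(1-s)|>|a|$ on $\sigma\geq\sigma_0$, $|t|\geq 1$. Setting $w=1-s$ and $E_{1k}(a)=1-\sigma_0\leq 0$ gives the left region.

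The main obstacle is making the lower bound for $|\Gamma(s)\cos(\pi s/2)|(2\pi)^{-\sigma}$ uniform in $|t|\geq 1$, including the regime where $|t|$ is comparable to or much larger than $\sigma$. The identity $\tfrac{\pi|t|}{2}-t\arg s=|t|\arctan(\sigma/|t|)$ is the key point that neutralizes the $t$-dependence; after that, the Stirling factor $|s|^{\sigma-1/2}$ beats $(2\pi e)^{\sigma}$ for $\sigma$ large enough, and the estimate goes through uniformly.
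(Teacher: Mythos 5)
Your proposal is correct and follows essentially the same route as the paper: the half-plane $\sigma\geq E_{2k}(a)$ is handled by the decay of the Dirichlet series $\sum_{n\geq2}(-\log n)^k n^{-s}$, and the half-plane $\sigma\leq E_{1k}(a)$, $|t|\geq1$ by Lemma \ref{lem1} showing $\zeta^{(k)}(1-s)\to\infty$. The only difference is that you make explicit, via Stirling's formula and the identity $\tfrac{\pi|t|}{2}-t\arg s=|t|\arctan(\sigma/|t|)$, the uniformity in $t$ that the paper leaves implicit (and you treat $a=0$ directly rather than citing Titchmarsh and Spira), which is a harmless strengthening of the same argument.
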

\begin{proof}
When $a=0$, Titchmarsh \cite[Theorem 11.5(C)]{ti} and Spira \cite{sp} have already proved this theorem. Hence we only prove the case $a\neq0$. When $\sigma\leq E_{1k}(a)$ and $|t|\geq1$, it follows from Lemma \ref{lem1} that $\zeta^{(k)}(1-s)\to\infty$ as $\sigma\to\infty$, so $\zeta^{(k)}(1-s)\neq a$ for sufficiently large $\sigma$. When $\sigma\geq E_{2k}(a)$, since $\zeta^{(k)}(s)=\sum_{n=2}^{\infty}(-\log n)^k/n^s\to0$ as $\sigma\to\infty$, we have $\zeta^{(k)}(s)\neq a$ for sufficiently large $\sigma$.
\end{proof}
To state the next theorem, we define the region $\mathcal{C}_n$ as
\begin{align*}
\mathcal{C}_n:=\{s\in\mathbb{C}\ |\ -2n-1<\sigma<-2n+1,\ -1<t<1\}.
\end{align*}
Spira \cite{sp1} proved that there is an $\alpha_k$ such that $\zeta^{(k)}(s)$ has exactly one real zero in $\mathcal{C}_n$ for $1-2n\leq\alpha_k$. Levinson \cite{le} pointed out that $\zeta(s)=a$ has exactly one root in the neighborhood of $s=-2n$ for large $n$. The same phenomenon holds for $a$-points of $\zeta^{(k)}(s)$.
\begin{thm}\label{thm3}
For any positive integer $k$, there exists a positive integer $N=N_k(a)$ such that $\zeta^{(k)}(s)=a$ has just one root in $\mathcal{C}_n$ for each $n\geq N$. 
\end{thm}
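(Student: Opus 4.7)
The plan is to transform via $w = 1 - s$ and apply Rouché's theorem. Counting roots of $\zeta^{(k)}(w) = a$ in $\mathcal{C}_n$ is the same as counting roots of $\zeta^{(k)}(1-s) = a$ in the rectangle $S_n := \{s : 2n < \sigma < 2n+2,\, |t| < 1\}$, on which the identity (\ref{L3}) is valid. I would split (\ref{L3}) as $\zeta^{(k)}(1-s) = M(s) + E(s)$, where
\[
M(s) := (-1)^k\, 2(2\pi)^{-s}\, \Gamma^{(k)}(s)\, \cos\tfrac{\pi s}{2}\, \zeta(s),
\]
and $E(s)$ is the remaining sum over $0 \leq j \leq k-1$. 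In $S_n$ the factor $\cos(\pi s/2)$ vanishes only (and simply) at $s = 2n+1$, while $(2\pi)^{-s}$, $\Gamma^{(k)}(s) = \Gamma(s)(\log s)^k(1+o(1))$ from (\ref{L1}), and $\zeta(s) = 1 + O(2^{-\sigma})$ are nonvanishing; hence $M$ has exactly one simple zero in $S_n$, namely $s = 2n+1$.

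Next I would estimate $M$ and $E$ on $\partial S_n$. The identity $|\cos(\pi s/2)|^2 = \cos^2(\pi\sigma/2) + \sinh^2(\pi t/2)$ delivers a uniform lower bound $|\cos(\pi s/2)| \geq 1$ on $\partial S_n$: on the vertical edges $\sigma \in \{2n, 2n+2\}$ the first summand equals $1$, and on the horizontal edges $|t| = 1$ the second summand equals $\sinh^2(\pi/2) > 1$. Combining this with Stirling for $|\Gamma(s)|$ and $\zeta(s) \asymp 1$ yields
\[
|M(s)| \gg (\log n)^k\, \Gamma(2n)\, (2\pi)^{-2n} \qquad (s \in \partial S_n).
\]
On the same contour $\cos(\pi s/2)$, $\sin(\pi s/2)$, and each $\zeta^{(n)}(s)$ are all $O(1)$, so every summand of $E$ is $\ll (\log n)^{k-1}\Gamma(2n)(2\pi)^{-2n}$ by (\ref{L1}); consequently
\[
\frac{|E(s)|}{|M(s)|} = O\!\left(\frac{1}{\log n}\right) \quad\text{on}\quad \partial S_n.
\]

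Since $|M(s)|$ tends to infinity uniformly on $\partial S_n$ as $n \to \infty$, for $n \geq N = N_k(a)$ large enough one has simultaneously $|a| < |M(s)|/2$ and $|E(s)| < |M(s)|/2$ on $\partial S_n$, whence $|a| < |M(s)|$ and $|E(s)| < |M(s) - a|$. A first application of Rouché's theorem to $M$ and $M - a$ shows that $M - a$ has exactly one zero in $S_n$ (matching the single zero of $M$), and then Rouché applied to $M - a$ and $(M - a) + E = \zeta^{(k)}(1-s) - a$ shows that $\zeta^{(k)}(1-s) - a$ also has exactly one zero in $S_n$, which is the desired statement. The main technical obstacle is verifying the contour estimates uniformly through the segment $|t| \leq 1$, where Lemma \ref{lem1} itself does not directly apply; this is handled by arguing straight from the exact identity (\ref{L3}) and the explicit lower bound $|\cos(\pi s/2)| \geq 1$ on $\partial S_n$, rather than from the consolidated asymptotic of Lemma \ref{lem1}.
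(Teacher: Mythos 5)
Your proposal is correct and follows essentially the same route as the paper: the same change of variables $s\mapsto 1-s$, the same splitting of (\ref{L3}) into the dominant term $(-1)^k2(2\pi)^{-s}\Gamma^{(k)}(s)\cos\frac{\pi s}{2}\,\zeta(s)$ (the paper's $G_1$, your $M$) plus a lower-order part, the same observation that the main term has the single simple zero $s=2n+1$ coming from $\cos(\pi s/2)$, and the same Rouch\'e comparison on the boundary of the translated box, using $|\cos(\pi s/2)|$ bounded below there and $\Gamma^{(j)}(s)\sim\Gamma(s)(\log s)^j$ from (\ref{L1}). The only cosmetic differences are that you absorb $-a$ via a second application of Rouch\'e while the paper folds it into $G_2$, and that your intermediate bounds should be stated pointwise, i.e. $|E(s)|\ll(2\pi)^{-\sigma}|\Gamma(s)|(\log n)^{k-1}$ and $|M(s)|\gg(2\pi)^{-\sigma}|\Gamma(s)|(\log n)^{k}$ (the versions normalized by $\Gamma(2n)(2\pi)^{-2n}$ are off by a factor of order $n^2$ on the edge $\sigma=2n+2$), which is exactly how the paper argues and still gives $|E(s)|/|M(s)|=O(1/\log n)$ on the contour.
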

\begin{proof}
We prove that there exists an integer $N=N_k(a)\in\mathbb{N}$ such that $\zeta^{(k)}(1-s)-a$ has just one zero in $\mathcal{C}'_n$ for each $n\geq N$, where 
\begin{align*}
\mathcal{C}'_n:=\{s\in\mathbb{C}\ |\ 2n<\sigma<2n+2,\ -1<t<1\}.
\end{align*}
By (\ref{L3}), we have
\begin{align*}
\zeta^{(k)}(1-s)-a=&\left\{(-1)^k2(2\pi)^{-s}\Gamma^{(k)}(s)\cos\frac{\pi s}{2}\ \zeta(s)\right\}\\
&\qquad+\left\{(-1)^k2(2\pi)^{-s}\sum_{j=0}^{k-1}\Gamma^{(j)}(s)R_{jk}(s)-a\right\}\\
=:&G_1(s)+G_2(s),
\end{align*}
say. By (\ref{L4}) and (\ref{L5}), there exists an $N_1$ such that $|G_1(s)|>|G_2(s)|$ holds in
$$
\{s\in\mathbb{C}\ |\ 2n\leq\sigma\leq2n+2,\ t=1\}
$$
for each $n\geq N_1$. Considering the complex conjugate, we find that there exists an $N_2$ such that $|G_1(s)|>|G_2(s)|$ holds in
$$
\{s\in\mathbb{C}\ |\ 2n\leq\sigma\leq2n+2,\ t=-1\}
$$
for each $n\geq N_2$. Next, we consider the segment
$$
\{s\in\mathbb{C}\ |\ \sigma=2n,\ -1\leq t\leq-1\}.
$$
Since $|\cos \pi s/2|\asymp1$ holds on this segment, we have
\begin{align*}
|G_1(s)|&\gg(2\pi)^{-\sigma}|\Gamma(s)||\log s|^k,\\
|G_2(s)|&\ll(2\pi)^{-\sigma}|\Gamma(s)||\log s|^{k-1}.
\end{align*}
Hence there exists an $N_3$ such that $|G_1(s)|>|G_2(s)|$ holds on this segment for each $n\geq N_3$.

Choosing $N=\max\{N_1,N_2,N_3\}$ and applying Rouch\'e's theorem, $\zeta^{(k)}(1-s)-a$ and $G_1(s)$ has the same number of zeros in $\mathcal{C}'_n$ for each $n\geq N$. The function $G_1(s)$ has just one zero $s=2n+1$ in $\mathcal{C}'_n$. Therefore, $\zeta^{(k)}(1-s)-a$ has just one zero in $\mathcal{C}'_n$.
\end{proof}

Note that if $a\in\mathbb{C}\setminus\mathbb{R}$, there are infinitely many $a$-points of $\zeta^{(k)}(s)$ in at least one of $\{s\in\mathbb{C}\ |\ 0<t<1\}$ or  $\{s\in\mathbb{C}\ |\ -1<t<0\}$, since $\zeta^{(k)}(s)\in\mathbb{R}$ for $s\in\mathbb{R}$. Furthermore if $a\in\mathbb{C}\setminus\mathbb{R}$, there exists infinitely many $a$-points, or infinitely many $\overline{a}$-points, of $\zeta^{(k)}(s)$ in $\{s\in\mathbb{C}\ |\ 0<t<1\}$ since  $\zeta^{(k)}(\overline{s})=\overline{\zeta^{(k)}(s)}$.

Next, to prove the main theorems, we prove the following three lemmas, Lemma \ref{LEM4}, Lemma \ref{LEM3.1} and Lemma \ref{LEM3.2}. These lemmas are generalizations of the classical results. The first lemma is a generalization of \cite[(2.12.6)]{ti} and \cite[(2.12.7)]{ti}.  The second lemma is a generalization of \cite[THEOREM 9.2]{ti}. The third lemma is a generalization of \cite[THEOREM 9.6 (A)]{ti}.

\begin{lem}\label{LEM4}
For any positive integer $k$ and any complex number $a$, there exist complex numbers $A_{k,a},\ B_{k,a}$ and a non-negative integer $m_{k,a}$ such that the following equations hold;
\begin{align*}
&(s-1)^{k+1}\left(\zeta^{(k)}(s)-a\right)=e^{A_{k,a}+B_{k,a}s}s^{m_{k,a}}\prod_{\rho^{(k)}_a\neq0}\left(1-\frac{s}{\rho^{(k)}_a}\right)e^{s/\rho^{(k)}_a},\\
&\frac{\zeta^{(k+1)}(s)}{\zeta^{(k)}(s)-a}=-\frac{k+1}{s-1}+B_{k,a}+\frac{m_{k,a}}{s}+\sum_{\rho^{(k)}_a\neq0}\left(\frac{1}{s-\rho^{(k)}_a}+\frac{1}{\rho^{(k)}_a}\right).
\end{align*}
\end{lem}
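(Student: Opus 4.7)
The goal is to recognize both identities as the Hadamard factorization, and its logarithmic derivative, of the entire function $F(s) := (s-1)^{k+1}\bigl(\zeta^{(k)}(s) - a\bigr)$, whose zeros are exactly the $a$-points $\rho_a^{(k)}$ of $\zeta^{(k)}(s)$. Entireness follows from the Laurent expansion $\zeta(s) = (s-1)^{-1} + \gamma_0 + O(s-1)$ at $s=1$, which differentiates to $\zeta^{(k)}(s) = (-1)^k k!\,(s-1)^{-(k+1)} + H(s)$ with $H$ holomorphic near $s=1$; the pole of order $k+1$ is cancelled by the prefactor, and in fact $F(1) = (-1)^k k! \neq 0$, so the zeros of $F$ coincide with the $a$-points of $\zeta^{(k)}(s)$ counted with multiplicity.

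Next, I establish the order bound $\rho(F) \leq 1$. For $\sigma \geq 2$ the Dirichlet series $\zeta^{(k)}(s) = \sum_{n\geq 2}(-\log n)^k n^{-s}$ is uniformly bounded, so $|F(s)| \ll |s|^{k+1}$. For $\sigma \leq -1$ and $|t|\geq 1$, Lemma \ref{lem1} applied with the substitution $w = 1-s$ gives
\[
|\zeta^{(k)}(s)| \ll |\Gamma(1-s)|\,|\log(1-s)|^k\,\bigl|\cos\tfrac{\pi(1-s)}{2}\bigr|,
\]
and Stirling's formula $|\Gamma(w)| \ll \exp(C|w|\log|w|)$ together with $|\cos(\pi(1-s)/2)| \ll e^{\pi|t|/2}$ implies $|F(s)| \ll \exp(C'|s|\log|s|)$. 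The remaining bounded region $-1 \leq \sigma \leq 2$, $|t| \leq 1$ is handled by direct boundedness of $F$ (the apparent pole at $s=1$ has already been removed), and a Phragm\'en--Lindel\"of interpolation on horizontal half-strips glues the two outer estimates to the intermediate vertical line $\sigma = 2$. Consequently $|F(s)| \ll \exp(|s|^{1+\varepsilon})$ for every $\varepsilon > 0$.

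Hadamard's factorization theorem for entire functions of order at most one then yields complex constants $A_{k,a}$, $B_{k,a}$ and a non-negative integer $m_{k,a}$ (the order of vanishing of $F$ at $s=0$, which is positive precisely when $\zeta^{(k)}(0) = a$) such that
\[
F(s) = e^{A_{k,a} + B_{k,a} s}\, s^{m_{k,a}} \prod_{\rho_a^{(k)}\neq 0}\left(1 - \frac{s}{\rho_a^{(k)}}\right) e^{s/\rho_a^{(k)}},
\]
which is the first identity. Convergence of this genus-one canonical product rests on $\sum |\rho_a^{(k)}|^{-1-\varepsilon} < \infty$, itself a consequence of the order-one bound via Jensen's inequality. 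The second identity is then obtained by term-by-term logarithmic differentiation, legitimate by the uniform convergence of the product on compacta disjoint from its zero set; the left-hand side is identified using
\[
\frac{F'(s)}{F(s)} = \frac{k+1}{s-1} + \frac{\zeta^{(k+1)}(s)}{\zeta^{(k)}(s)-a},
\]
so that rearranging gives exactly the claimed formula for $\zeta^{(k+1)}(s)/(\zeta^{(k)}(s)-a)$.

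The main obstacle is the second step: the Phragm\'en--Lindel\"of propagation of the estimates across the critical strip must be set up carefully so that the exponent of $|s|$ in the final bound remains at most $1+\varepsilon$, since any loss here would push the genus above one and spoil the specific shape of the product.
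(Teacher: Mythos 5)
Your skeleton coincides with the paper's: show that $F(s)=(s-1)^{k+1}\bigl(\zeta^{(k)}(s)-a\bigr)$ is entire of finite order at most $1$ (so of genus at most one), apply Hadamard's factorization theorem, and obtain the second identity by logarithmic differentiation via $F'/F=\frac{k+1}{s-1}+\frac{\zeta^{(k+1)}(s)}{\zeta^{(k)}(s)-a}$. The observations that $F(1)=(-1)^k k!\neq0$ and that $m_{k,a}$ is the order of vanishing at $s=0$ are correct refinements the paper leaves implicit.

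Where you diverge from the paper is the growth estimate, and that is also where your argument has a gap. Your regional bounds cover $\sigma\geq2$ and $\{\sigma\leq-1,\ |t|\geq1\}$ plus a compact set, but not the vertical strip $-1\leq\sigma\leq2$, $|t|\geq1$, nor the horizontal half-strip $\sigma\leq-1$, $|t|\leq1$, and the appeal to Phragm\'en--Lindel\"of to fill these regions is not self-supporting: Phragm\'en--Lindel\"of in an unbounded strip requires an a priori growth hypothesis inside the strip (roughly $|F(s)|\ll\exp(e^{c|t|})$ with $c$ smaller than $\pi$ divided by the width), and such interior control of $F$ is precisely what the lemma is trying to establish, so as written the step is circular unless you import the missing input from elsewhere. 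The fix is standard: either quote the classical polynomial bound for $\zeta$ in vertical strips and transfer it to $\zeta^{(k)}$ by Cauchy's integral formula, or note that Spira's identity (\ref{L3}) behind Lemma \ref{lem1} is an identity of meromorphic functions and remains valid for $|t|\leq1$ once $\Re(1-s)\geq2$, since no poles of $\Gamma^{(j)}$ occur there. The paper avoids the case analysis altogether: it first proves $\zeta(s)\ll\exp(|s|^{1+\varepsilon})$ for all $|s|>2$ from the functional equation and Stirling's formula, and then bounds $\zeta^{(k)}(s)\ll\exp(|s|^{1+2\varepsilon})$ for $|s|>3$ in one stroke by Cauchy's integral formula on a circle of radius $1$; since any order strictly below $2$ suffices for the genus-one Hadamard product, this cruder but global bound is all that is needed, and you may want to adopt it in place of the Phragm\'en--Lindel\"of gluing.
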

\begin{proof}
The second equation is given from the first equation by using the logarithmic derivative. Hence we prove the first equation. By Cauchy's integral theorem, we have
\begin{align}
\zeta^{(k)}(s)=\frac{k!}{2\pi i}\int_{|z-s|=\alpha}\frac{\zeta(z)}{(z-s)^{k+1}}dz\label{zeta2}
\end{align}
for positive $\alpha$. The Riemann zeta function is estimated as $\zeta(s)\ll|s|$ for $\sigma\geq1/2,|s-1|>1$ by \cite[(2.12.2)]{ti}. It follows from the functional equation $\zeta(1-s)=2(2\pi)^{-s}\cos(\pi s/2)\ \Gamma(s)\zeta(s)$ and Stirling's formula $|\Gamma(s)|\ll|\exp\{(s-1/2)\log s-s+O(1)\}|$ that we can also estimate the Riemann zeta function for $\sigma\leq1/2,|s|>1$ as
\begin{align}
\zeta(s)\ll\exp(|s|^{1+\varepsilon})\label{zeta}
\end{align}
with any small $\varepsilon>0$. Therefore (\ref{zeta}) holds for $|s|>2$. Using this estimation, we estimate (\ref{zeta2}) for $|s|>3$ with $\alpha=1$ as
\begin{align*}
\zeta^{(k)}(s)&\ll\exp(|2s|^{1+\varepsilon})\ll\exp(|s|^{1+2\varepsilon}).
\end{align*}

Since $\zeta(s)$ has only one simple pole at $s=1$ and the above estimation holds, $(s-1)^{k+1}(\zeta^{(k)}(s)-a)$ is an entire function and is of order $1$. Hence by the Hadamard factorization theorem, the lemma is valid.
\end{proof}

\begin{lem}\label{LEM3.1}
For any complex number $a$ and any sufficiently large $T$, we have
\begin{align*}
N_k(a;1,T+1)-N_k(a;1,T)\ll\log T.
\end{align*}
\end{lem}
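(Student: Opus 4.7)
The plan is to apply Jensen's formula to $f(s):=\zeta^{(k)}(s)-a$ on a disc of bounded radius centered far to the right of the critical strip, so that the growth of $f$ on the boundary is polynomial in $T$ while $|f|$ admits a uniform positive lower bound at the center. By Theorem \ref{thm2}, every $a$-point counted by the difference $N_k(a;1,T+1)-N_k(a;1,T)$ has real part in the fixed interval $[E_{1k}(a),E_{2k}(a)]$, so the difference equals the number of zeros of $f$ in the rectangle $\mathcal{R}_T:=[E_{1k}(a),E_{2k}(a)]\times[T,T+1]$.

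I would choose a fixed $\sigma_0>E_{2k}(a)$ large enough that the $n=2$ term of the absolutely convergent Dirichlet series $\zeta^{(k)}(\sigma_0+it)=\sum_{n\geq 2}(-\log n)^{k}n^{-s}$ dominates the tail uniformly in $t\in\mathbb{R}$, and moreover (if $a\neq 0$) $|\zeta^{(k)}(\sigma_0+it)|\leq |a|/2$. In either case there is a constant $c_0=c_0(k,a,\sigma_0)>0$ with $|f(\sigma_0+it)|\geq c_0$ for every real $t$. Setting $R:=\sigma_0-E_{1k}(a)+2$ and $s_0:=\sigma_0+i(T+\tfrac{1}{2})$, one checks that $\mathcal{R}_T\subset B(s_0,R)$.

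On the larger disc $B(s_0,2R)$ the real part of $s$ stays in a fixed bounded interval while $|t|\asymp T$, so the standard polynomial convexity bound for $\zeta(s)$ in any fixed vertical strip, combined with Cauchy's formula for $\zeta^{(k)}$ exactly as in the derivation of (\ref{zeta2}), gives $|f(s)|\ll T^{C}$ for some constant $C=C(k,a)$, hence $\log|f(s)|\ll\log T$ uniformly on $|s-s_0|=2R$. Letting $n(r)$ denote the number of zeros of $f$ in $|s-s_0|\leq r$, Jensen's formula yields
$$n(R)\log 2\leq\int_0^{2R}\frac{n(r)}{r}\,dr=\frac{1}{2\pi}\int_0^{2\pi}\log|f(s_0+2Re^{i\theta})|\,d\theta-\log|f(s_0)|\ll\log T.$$
Since $\mathcal{R}_T\subset B(s_0,R)$, the lemma follows from $N_k(a;1,T+1)-N_k(a;1,T)\leq n(R)\ll\log T$.

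The main technical point is ensuring the uniform lower bound $|f(s_0)|\geq c_0$ at the center: for $a\neq 0$ this is immediate once $\sigma_0$ is chosen so large that $|\zeta^{(k)}(\sigma_0+it)|<|a|/2$, while for $a=0$ it requires verifying that the $n=2$ term of the Dirichlet series genuinely dominates the tail for the chosen $\sigma_0$, which is a routine estimate of $\sum_{n\geq 3}(\log n)^{k}n^{-\sigma_0}$ against $(\log 2)^{k}2^{-\sigma_0}$. The polynomial growth on $\partial B(s_0,2R)$ is standard since the disc stays a bounded distance from the pole at $s=1$ once $T$ is large.
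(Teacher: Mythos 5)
Your proposal is correct and takes essentially the same route as the paper: confine the relevant $a$-points to the fixed strip $E_{1k}(a)\leq\sigma\leq E_{2k}(a)$ via Theorem \ref{thm2}, apply Jensen's theorem on a disc of bounded radius centered at a point far to the right of the strip, bound $\log|f|$ on the boundary circle by $O(\log T)$ using the polynomial growth of $\zeta^{(k)}$ in a fixed strip (the paper's estimate (\ref{est})), and use a uniform lower bound for $|f|$ at the center. The only cosmetic difference is that you handle $a=0$ inside the same Jensen argument via dominance of the $n=2$ term of the Dirichlet series, whereas the paper disposes of that case by citing Berndt's formula (\ref{berndt}).
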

\begin{proof}
First we prove that the estimation 
\begin{align}
\zeta^{(k)}(s)\ll |t|^{\mu(\sigma)+\varepsilon}\label{est}
\end{align}
holds as $|t|\to\infty$ for any small $\varepsilon>0$ and fixed $\sigma$ if $\mu(\sigma)$ satisfies $\zeta(s)\ll |t|^{\mu(\sigma)+\varepsilon}$. This function $\mu(\sigma)$ satisfies the inequality
\begin{align*}
\mu(\sigma)\leq\begin{cases}
  \displaystyle0&(\sigma\geq1)\\
  \displaystyle1/2-\sigma/2&(0<\sigma<1)\\
  \displaystyle1/2-\sigma&(\sigma\leq0)
\end{cases}
\end{align*}
by \cite[Section 5.1]{ti}. We use (\ref{zeta2}). In (\ref{zeta2}), $\zeta(z)$ can be estimated for fixed $\sigma$ as
\begin{align*}
\zeta(z)&\ll |t+\alpha|^{\mu(\sigma-\alpha)+\varepsilon}\\
&\ll |1+\alpha/t|^{\mu(\sigma)+\alpha+\varepsilon}\cdot|t|^{\mu(\sigma)+\alpha+\varepsilon}.
\end{align*}
Choosing $\alpha=1/\log t$, then $\zeta(z)\ll|t|^{\mu(\sigma)+\varepsilon}$ holds. Substituting this estimation into (\ref{zeta2}),  we have
\begin{align*}
\zeta^{(k)}(s)&\ll|\log t|^k|t|^{\mu(\sigma)+\varepsilon}\\
&\ll|t|^{\mu(\sigma)+k\log\log t/\log t+\varepsilon}.
\end{align*}
Since we have $k\log\log t/\log t\to 0$ as $t\to\infty$, the estimation (\ref{est}) holds.

When $a=0$, we can easily obtain Lemma \ref{LEM3.1} from (\ref{berndt}). Hence we only consider the case $a\neq0$. Since $\zeta^{(k)}(s)\to0$ as $\sigma\to0$, there exists a constant $C_1>E_{2k}(a)$ such that $|\zeta^{(k)}(C_1+it)|\leq |a|/2$ for all $t\in\mathbb{R}$. By Jensen's theorem, we have
\begin{align*}
&\int_0^{C_1-E_{1k}(a)+2}\frac{n(r)}{r}dr\\
&=\frac{1}{2\pi}\int_0^{2\pi}\log\left|\zeta^{(k)}\left(C_1+iT+\left(C_1-E_{1k}(a)+2\right)e^{i\theta}\right)-a\right|d\theta\\
&-\log\left|\zeta^{(k)}\left(C_1+iT\right)-a\right|,
\end{align*}
where $n(r)$ is the number of zeros of $\zeta^{(k)}(s)-a$ in the circle with center $C_1+iT$ and radius $r$. Since $\zeta^{(k)}(s)\ll |t|^{\mu(\sigma)+\varepsilon}$, there exists a constant $C_2>0$ such that
\begin{align*}
\log\left|\zeta^{(k)}\left(C_1+iT+(C_1-E_{1k}(a)+2)e^{i\theta}\right)-a\right|\leq C_2\log T.
\end{align*}
Furthermore since $|\zeta^{(k)}(C_1+iT)|\leq |a|/2$, we have
\begin{align*}
\log\left|\zeta^{(k)}\left(C_1+iT\right)-a\right|\ll 1.
\end{align*}
Hence we have
\begin{align}
\int_0^{C_1-E_{1k}(a)+2}\frac{n(r)}{r}dr\ll\log T.\label{nt1}
\end{align}
On the other hand, we have
\begin{align}
\notag&\int_0^{C_1-E_{1k}(a)+2}\frac{n(r)}{r}dr\\
\notag&\quad\geq\int_{C_1-E_{1k}(a)+1}^{C_1-E_{1k}(a)+2}\frac{n(r)}{r}dr\\
\label{nt2}&\quad\geq n(C_1-E_{1k}(a)+1)\int_{C_1-E_{1k}(a)+1}^{C_1-E_{1k}(a)+2}\frac{1}{r}dr.
\end{align}
From (\ref{nt1}) and (\ref{nt2}), we have
\begin{align*}
N_k(a;1,T+1)-N_k(a;1,T)\leq n(C_1-E_{1k}(a)+1)\ll\log T.
\end{align*}
\end{proof}

\begin{lem}\label{LEM3.2}
Let $\sigma_1$ and $\sigma_2$ be real numbers with $\sigma_1<\sigma_2$. For $s\in\mathbb{C}$ with $\sigma_1<\sigma<\sigma_2$ and large $t$, we have
\begin{align*}
\frac{\zeta^{(k+1)}(s)}{\zeta^{(k)}(s)-a}=\sum_{|\gamma_a^{(k)}-t|<1}\frac{1}{s-\rho_a^{(k)}}+O(\log t).
\end{align*}
\end{lem}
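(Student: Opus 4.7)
The plan is to apply the partial-fraction expansion from the second equation of Lemma \ref{LEM4} at both $s$ and a high reference point $s_0=\sigma_0+it$, and subtract. I would fix once and for all some $\sigma_0>\max(\sigma_2,E_{2k}(a))+1$; by Theorem \ref{thm2} the value $\zeta^{(k)}(s_0)$ stays bounded away from $a$ uniformly in $t$, while the tail of the Dirichlet series gives $\zeta^{(k+1)}(s_0)\ll 2^{-\sigma_0}(\log 2)^{k+1}$. Hence the left-hand side of the identity at $s_0$ is $O(1)$, and the terms $-(k+1)/(s-1)$, $m_{k,a}/s$, together with their counterparts at $s_0$, are all $O(1/t)$ for large $t$. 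Subtracting the two identities eliminates $B_{k,a}$ and the constant series $\sum 1/\rho_a^{(k)}$, and yields
\[
\frac{\zeta^{(k+1)}(s)}{\zeta^{(k)}(s)-a}=\sum_{\rho_a^{(k)}\neq 0}\left(\frac{1}{s-\rho_a^{(k)}}-\frac{1}{s_0-\rho_a^{(k)}}\right)+O(1).
\]

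I would then split the sum according to whether $|\gamma_a^{(k)}-t|<1$ or $\geq 1$. In the close range, $|s_0-\rho_a^{(k)}|\geq \sigma_0-E_{2k}(a)>0$, so each term $1/(s_0-\rho_a^{(k)})$ is bounded by an absolute constant; since Lemma \ref{LEM3.1} guarantees that only $O(\log t)$ $a$-points lie in this range, this part produces exactly the main term $\sum_{|\gamma_a^{(k)}-t|<1}1/(s-\rho_a^{(k)})$ with an error of size $O(\log t)$.

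The main obstacle is the far-range estimate. Since every nontrivial $a$-point satisfies $E_{1k}(a)\leq\beta_a^{(k)}\leq E_{2k}(a)$ by Theorem \ref{thm2}, for $|\gamma_a^{(k)}-t|\geq 1$ one obtains the crude bound
\[
\left|\frac{1}{s-\rho_a^{(k)}}-\frac{1}{s_0-\rho_a^{(k)}}\right|=\frac{|s_0-s|}{|s-\rho_a^{(k)}|\,|s_0-\rho_a^{(k)}|}\ll\frac{1}{|\gamma_a^{(k)}-t|^{2}}.
\]
I would then group $a$-points by the unit intervals $n\leq|\gamma_a^{(k)}-t|<n+1$ and apply Lemma \ref{LEM3.1} at height $t+n$---and, when $t-n<1$, its analogue for ordinates below the real axis obtained from $\zeta^{(k)}(\bar s)=\overline{\zeta^{(k)}(s)}$ by replacing $a$ with $\bar a$---to see that each such unit strip contains $O(\log(t+n))$ nontrivial $a$-points; the trivial $a$-points located by Theorem \ref{thm3} have $|\beta_a^{(k)}|\to\infty$ and contribute an absolutely convergent remainder of size $O(1)$. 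This gives
\[
\sum_{|\gamma_a^{(k)}-t|\geq 1}\left|\frac{1}{s-\rho_a^{(k)}}-\frac{1}{s_0-\rho_a^{(k)}}\right|\ll \sum_{n\geq 1}\frac{\log(t+n)}{n^{2}}\ll\log t,
\]
which, combined with the close-range contribution, finishes the proof.
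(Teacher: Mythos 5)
Your proof is correct and follows essentially the same route as the paper: the partial-fraction identity of Lemma \ref{LEM4}, comparison with a reference point $\sigma_0+it$ to the right of the strip of nontrivial $a$-points, the unit-strip count of Lemma \ref{LEM3.1} for the far-range sum, and the trivial $a$-points of Theorem \ref{thm3} as an absolutely convergent tail. The only (cosmetic) difference is that you subtract the identity at $s_0$ over all $a$-points at once, so the trivial ones contribute $O(1)$, whereas the paper first disposes of them via the Hadamard convergence factors $1/\rho_a^{(k)}$ (getting $O(\log t)$) and only then introduces the reference point $E_{2k}(a)+1+it$ for the $a$-points in the strip.
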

\begin{proof}
From Lemma \ref{LEM4}, we have
\begin{align}
\frac{\zeta^{(k+1)}(s)}{\zeta^{(k)}(s)-a}=\sum_{\rho^{(k)}_a\neq0}\left(\frac{1}{s-\rho^{(k)}_a}+\frac{1}{\rho^{(k)}_a}\right)+O(\log t).\label{A1}
\end{align}
We define $\mathcal{D}_1,\mathcal{D}_2,\mathcal{D}_3$ by
\begin{align*}
\mathcal{D}_1&:=\{s\in\mathbb{C}\ |\ E_{1k}(a)<\sigma<E_{2k}(a),\ 1<|t|\},\\
\mathcal{D}_2&:=\{s\in\mathbb{C}\ |\ \sigma<-2N_k(a)+1,\ |t|<1\},\\
\mathcal{D}_3&:=\{s\in\mathbb{C}\ |\ -2N_k(a)+1\leq\sigma\leq E_{2k}(a),\ |t|\leq1\},
\end{align*}
respectively. We divide the summation of (\ref{A1}) into the following three parts; 
\begin{align*}
\sum_{\rho^{(k)}_a\neq0}\left(\frac{1}{s-\rho^{(k)}_a}+\frac{1}{\rho^{(k)}_a}\right)&=\left(\sum_{\rho^{(k)}_a\in\mathcal{D}_1}+\sum_{\rho^{(k)}_a\in\mathcal{D}_2}+ \sum_{\substack{\rho^{(k)}_a\in\mathcal{D}_3\\\rho^{(k)}_a\neq0}}\right)\left(\frac{1}{s-\rho^{(k)}_a}+\frac{1}{\rho^{(k)}_a}\right)\\
&=:S_1(s)+S_2(s)+S_3(s),
\end{align*}
say. In the region $\mathcal{D}_3$, $\zeta^{(k)}(s)-a$ has only finitely many zeros, so we have $S_3(s)=O(1)$. By Theorem \ref{thm3}, every $a$-point is in $\mathcal{C}_n$, hence we have
\begin{align*}
|S_2|&\leq\sum_{n\geq N_k(a)}\left|\frac{1}{it+2n +O(1)}+\frac{1}{-2n +O(1)}\right|\\
&\ll\sum_{n\geq N_k(a)}\left|\frac{1}{it+2n}-\frac{1}{2n}\right|\\
&\ll\sum_{n\geq 1}\left(\frac{t}{4n^2+t^2}+\frac{t^2}{n(4n^2+t^2)}\right).
\end{align*}
The first term can be estimated as
\begin{align*}
\sum_{n\geq 1}\frac{t}{4n^2+t^2}\ll\sum_{1\leq n\leq t}\frac{1}{t}+\sum_{ n> t}\frac{t}{n^2}\ll1,
\end{align*}
and the second term can be estimated as
\begin{align*}
\sum_{n\geq 1}\frac{t^2}{n(4n^2+t^2)}\ll\sum_{1\leq n\leq t}\frac{1}{n}+\sum_{ n> t}\frac{t^2}{n^3}\ll\log t.
\end{align*}
Therefore we have $S_2(s)=O(\log t)$.

Finally we estimate (\ref{A1}). When $s=E_{2k}(a)+1+it$, since $|\zeta^{(k)}(E_{2k}(a)+1+it)|<|\zeta^{(k)}(E_{2k}(a))|<|a|$ holds by Theorem \ref{thm2}, we have
\begin{align*}
\frac{\zeta^{(k+1)}(E_{2k}(a)+1+it)}{\zeta^{(k)}(E_{2k}(a)+1+it)-a}\ll1.
\end{align*}
By the above argument, we can also estimate $S_2(E_{2k}(a)+1+it)\ll\log t$ and $S_3(E_{2k}(a)+1+it)\ll 1$. Hence we have
\begin{align*}
\frac{\zeta^{(k+1)}(s)}{\zeta^{(k)}(s)-a}=\sum_{\rho^{(k)}_a\in\mathcal{D}_1}\left(\frac{1}{s-\rho^{(k)}_a}-\frac{1}{E_{2k}(a)+1+it-\rho^{(k)}_a}\right)+O(\log t).
\end{align*}
For integer $n\neq-1,0$, by Lemma \ref{LEM3.1}, we have
\begin{align*}
&\sum_{t+n<\gamma_a^{(k)}\leq t+n+1}\left(\frac{1}{s-\rho^{(k)}_a}-\frac{1}{E_{2k}(a)+1+it-\rho^{(k)}_a}\right)\\
&\qquad=\sum_{t+n<\gamma_a^{(k)}\leq t+n+1}\frac{E_{2k}(a)+1-\sigma}{(s-\rho^{(k)}_a)(E_{2k}(a)+1+it-\rho^{(k)}_a)}\\
&\qquad\ll\sum_{t+n<\gamma_a^{(k)}\leq t+n+1}\frac{1}{(t-\gamma_a^{(k)})^2}\\
&\qquad\ll\sum_{t+n<\gamma_a^{(k)}\leq t+n+1}\frac{1}{n^2}\ll\frac{\log|t+n|}{n^2}.
\end{align*}
It follows from the above estimations that the following three estimations hold;
\begin{align*}
&\sum_{t+1<\gamma_a^{(k)}}\left(\frac{1}{s-\rho^{(k)}_a}-\frac{1}{E_{2k}(a)+1+it-\rho^{(k)}_a}\right)\ll\sum_{n=1}^{\infty}\frac{\log|t+n|}{n^2}\ll\log t,\\
&\sum_{1\leq\gamma_a^{(k)}<t-1}\left(\frac{1}{s-\rho^{(k)}_a}-\frac{1}{E_{2k}(a)+1+it-\rho^{(k)}_a}\right)\ll\sum_{1-t\leq n\leq -2}^{\infty}\frac{\log|t+n|}{n^2}+O(1)\ll\log t,\\
&\sum_{\gamma_a^{(k)}\leq-1}\left(\frac{1}{s-\rho^{(k)}_a}-\frac{1}{E_{2k}(a)+1+it-\rho^{(k)}_a}\right)\ll\sum_{n\leq-t-1}\frac{\log|t+n|}{n^2}+O(1)\ll1.
\end{align*}
Thus we have
\begin{align*}
\frac{\zeta^{(k+1)}(s)}{\zeta^{(k)}(s)-a}=\sum_{|\gamma_a^{(k)}-t|<1}\left(\frac{1}{s-\rho^{(k)}_a}-\frac{1}{E_{2k}(a)+1+it-\rho^{(k)}_a}\right)+O(\log t).
\end{align*}
The estimations
\begin{align*}
\sum_{|\gamma_a^{(k)}-t|<1}\frac{1}{E_{2k}(a)+1+it-\rho^{(k)}_a}\ll\sum_{|\gamma_a^{(k)}-t|<1}1\ll\log t
\end{align*}
are valid, so we obtain the lemma.
\end{proof}


\section{Proof of Theorem \ref{1}}

In this section, by applying Lemma \ref{lem1} and Lemma \ref{LEM3.2}, we prove Theorem \ref{1}.

($Proof$ $of$ $Theorem$ $\ref{1}$) 
Since $a\neq0$ and $\zeta^{(k)}(s)\to0$ as $\sigma\to\infty$, there exists a constant $E_{2k}'(a)\geq E_{2k}(a)$ such that
\begin{align}
\arg(\zeta^{(k)}(s)-a)\in(\arg (-a)-\pi/2,\arg (-a)+\pi/2)\label{TT1}
\end{align}
for all $s$ with $\sigma\geq E_{2k}'(a)$.
Furthermore, since it follows from Lemma \ref{lem1} that $\zeta^{(k)}(s)\to\infty$ holds as $\sigma\to-\infty$ for $t\geq1$, there exists a sufficiently small constant $E_{1k}'(a)\leq\min\{E_{1k}(a),-1\}$ such that $|a/\zeta^{(k)}(s)|<1$ for $\sigma\leq E_{1k}'(a)$ and $t\geq1$.
By the argument principle, we have
\begin{align*}
N_k(a;1,T)&=\frac{1}{2\pi}\Im\left(\int_{E_{1k}'(a)+i}^{E_{2k}'(a)+i}+\int_{E_{2k}'(a)+i}^{E_{2k}'(a)+iT}+\int_{E_{2k}'(a)+iT}^{E_{1k}'(a)+iT}+\int_{E_{1k}'(a)+iT}^{E_{1k}'(a)+i}\right)\frac{\zeta^{(k+1)}(s)}{\zeta^{(k)}(s)-a}ds\\
&=:\frac{1}{2\pi}(J_1+J_2+J_3+J_4),
\end{align*}
say. 

The first integral $J_1$ does not depend on $T$, so $J_1=O(1)$ holds.

The second integral is also estimated as $J_2=O(1)$ since we have
\begin{align*}
J_2=[\arg(\zeta^{(k)}(s)-a)]_{E_{2k}'(a)+i}^{E_{2k}'(a)+iT}\ll1
\end{align*}
by (\ref{TT1}).

Next we consider the fourth integral $J_4$. Since $|a/\zeta^{(k)}(s)|<1$, we have
\begin{align*}
J_4=\Im\int_{E_{1k}'(a)+iT}^{E_{1k}'(a)+i}\frac{\zeta^{(k+1)}(s)}{\zeta^{(k)}(s)}\left(1+\sum_{n=1}^{\infty}\left(\frac{a}{\zeta^{(k)}(s)}\right)^n\right)ds.
\end{align*}
By Lemma \ref{lem1} and Stirling's formula, we have
\begin{align}
\notag\zeta^{(k)}(E_{1k}'(a)+it)&\gg|t|^{1/2-E_{1k}'(a)}e^{-\pi |t|/2}\cdot\log(|t+2|)^k e^{\pi |t|/2}\\
&\gg|t|^{1/2-E_{1k}'(a)}\gg|t|^{3/2}.\label{hyoka}
\end{align}
Hence we have
\begin{align*}
\sum_{n=1}^{\infty}\left(\frac{a}{\zeta^{(k)}(E_{1k}'(a)+it)}\right)^n\ll|t|^{-3/2}.
\end{align*}
Moreover by Lemma \ref{lem1}, we have
\begin{align*}
\frac{\zeta^{(k+1)}(s)}{\zeta^{(k)}(s)}\ll|\log(1-s)|
\end{align*}
for $s=E_{1k}'(a)+it$. Therefore we have
\begin{align}
J_4&=\Im\int_{E_{1k}'(a)+iT}^{E_{1k}'(a)+i}\frac{\zeta^{(k+1)}(s)}{\zeta^{(k)}(s)}ds+O\left(\int_{E_{1k}'(a)+iT}^{E_{1k}'(a)+i}|\log(1-s)||t|^{-3/2}ds\right)\notag\\
&=\Im\int_{E_{1k}'(a)+iT}^{E_{1k}'(a)+i}\frac{\zeta^{(k+1)}(s)}{\zeta^{(k)}(s)}ds+O\left(1\right).\label{TT2}
\end{align}
The integral of the first term of (\ref{TT2}) coincides with $I_4$ in \cite{be}. By the calculations on $I_4$ in \cite{be}, we have
\begin{align*}
J_4=T\log\frac{T}{2\pi}-T+O(\log T).
\end{align*}

Finally, we consider the third term $J_3$. We apply Lemma \ref{LEM3.2}, then we have
\begin{align*}
J_3&=\Im\int_{E_{2k}'(a)+iT}^{E_{1k}'(a)+iT}\sum_{|\gamma_a^{(k)}-t|<1}\frac{1}{s-\rho_a^{(k)}}ds+O\left(\int_{E_{2k}'(a)+iT}^{E_{1k}'(a)+iT}\log tds\right)\\
&=\Im\sum_{|\gamma_a^{(k)}-T|<1}\int_{E_{2k}'(a)+iT}^{E_{1k}'(a)+iT}\frac{1}{s-\rho_a^{(k)}}ds+O\left(\log T\right).
\end{align*}
For each integral, we change the path of integration. If $\gamma_a^{(k)}\leq T$, then we change the path to the upper semicircle with center $\rho_a^{(k)}$ and radius $1$. If $\gamma_a^{(k)}> T$, then we change the path to the lower semicircle with center $\rho_a^{(k)}$ and radius $1$. Then we have
\begin{align*}
\int_{E_{2k}'(a)+iT}^{E_{1k}'(a)+iT}\frac{1}{s-\rho_a^{(k)}}ds\ll1.
\end{align*}
Therefore, by Lemma \ref{LEM3.1}, we have
\begin{align*}
J_3=\sum_{|\gamma_a^{(k)}-T|<1}O(1)+O\left(\log T\right)=O(\log T).
\end{align*}

Combining the estimations of $J_1, \ldots,J_4$, we obtain Theorem \ref{1}.  {\hfill $\square$}


\section{Proof of Theorem \ref{2}}

\begin{lem}\label{LEM4.1}
For $k\geq1$, $a\in\mathbb{C}$ and $s\in\mathbb{C}$ with sufficiently large $\sigma\geq E_{2k}(a)$, we have
\begin{align*}
&\frac{\zeta^{(k+1)}(s)}{\zeta^{(k)}(s)-a}\\
&=\begin{cases}
  \displaystyle\sum_{\substack{l\geq0\\n_0,\ldots,n_l\geq2}}\frac{(-1)^{k(l+1)}}{a^{l+1}}(\log n_0)^{k+1}(\log n_1\cdots\log n_l)^k\frac{1}{n_0^s\cdots n_l^s}&(a\neq0)\\
  \displaystyle\sum_{\substack{l\geq0\\n_0\geq2\\n_1,\ldots,n_l\geq3}}\left(\frac{-1}{(\log 2)^k}\right)^{l+1}(\log n_0)^{k+1}(\log n_1\cdots\log n_l)^k\frac{2^{(l+1)s}}{n_0^s\cdots n_l^s}&(a=0).
\end{cases}
\end{align*}
\end{lem}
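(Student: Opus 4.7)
The plan is to expand $1/(\zeta^{(k)}(s)-a)$ as an absolutely convergent geometric series, multiply by $\zeta^{(k+1)}(s)$, and then substitute the Dirichlet series
\[
\zeta^{(k)}(s)=(-1)^k\sum_{n\geq 2}\frac{(\log n)^k}{n^s},\qquad \zeta^{(k+1)}(s)=(-1)^{k+1}\sum_{n\geq 2}\frac{(\log n)^{k+1}}{n^s}
\]
(both absolutely convergent for $\sigma>1$) into every resulting factor, finally rearranging the multi-indexed sum using absolute convergence.

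For the case $a\neq 0$, Theorem \ref{thm2} guarantees $|\zeta^{(k)}(s)/a|<1$ throughout $\sigma\geq E_{2k}(a)$, so
\[
\frac{1}{\zeta^{(k)}(s)-a}=-\frac{1}{a}\sum_{l\geq 0}\Bigl(\frac{\zeta^{(k)}(s)}{a}\Bigr)^l
\]
converges absolutely. Multiplying by $\zeta^{(k+1)}(s)$ and expanding each factor as a Dirichlet series produces a sum over tuples $(n_0,n_1,\ldots,n_l)$ with all $n_i\geq 2$; the overall sign is $(-1)\cdot(-1)^{k+1}\cdot(-1)^{kl}/a^{l+1}=(-1)^{k(l+1)}/a^{l+1}$, and the logarithmic weight $(\log n_0)^{k+1}(\log n_1\cdots\log n_l)^k$ is exactly what the statement predicts.

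For the case $a=0$ the direct expansion above fails, so I would first isolate the leading $n=2$ term of $\zeta^{(k)}(s)$ by writing
\[
\zeta^{(k)}(s)=(-1)^k\frac{(\log 2)^k}{2^s}\bigl(1+X(s)\bigr),\qquad X(s)=\sum_{n\geq 3}\Bigl(\frac{\log n}{\log 2}\Bigr)^k\Bigl(\frac{2}{n}\Bigr)^s.
\]
A straightforward bound gives $|X(s)|\ll (\log 3/\log 2)^k(2/3)^\sigma\to 0$ as $\sigma\to\infty$, so for $\sigma$ sufficiently large $1/(1+X(s))=\sum_{l\geq 0}(-X(s))^l$ converges absolutely. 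Multiplying the reciprocal of $\zeta^{(k)}(s)$ by $\zeta^{(k+1)}(s)$, expanding $X(s)^l$ over $(n_1,\ldots,n_l)\in\{3,4,\ldots\}^l$, and substituting the Dirichlet series for $\zeta^{(k+1)}$ yields the stated formula, with coefficient $(-1)^{2k+1+l}/(\log 2)^{k(l+1)}=(-1/(\log 2)^k)^{l+1}$ and the factor $2^{(l+1)s}$ contributed by the reciprocal of the leading term.

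The only non-bookkeeping obstacle is verifying absolute convergence of the resulting multi-indexed series (so that every rearrangement is legitimate); this reduces to the geometric-series bounds on the "small" quantities $\zeta^{(k)}(s)/a$ or $X(s)$, which are immediate from Theorem \ref{thm2} and from the elementary estimate above, respectively. Once the expansions are set up, the rest of the proof is a careful sign and exponent accounting exercise.
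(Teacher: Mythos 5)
Your proposal is correct and follows essentially the same route as the paper: factor out the dominant term ($-a$ when $a\neq0$, the leading $(-1)^k(\log 2)^k2^{-s}$ term when $a=0$), expand the reciprocal as a geometric series valid for sufficiently large $\sigma$, substitute the Dirichlet series for $\zeta^{(k)}$ and $\zeta^{(k+1)}$, and rearrange; your sign and exponent bookkeeping matches the stated coefficients. The only difference is cosmetic: you make the absolute-convergence justification for the rearrangement explicit, which the paper leaves implicit.
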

\begin{proof}
When $a\neq0$, we have
\begin{align*}
\frac{\zeta^{(k+1)}(s)}{\zeta^{(k)}(s)-a}&=\frac{(-1)^{k+1}\sum_{n_0\geq 2}(\log n_0)^{k+1}/n_0^s}{-a(1-(-1)^ka^{-1}\sum_{n_1\geq2}(\log n_1)^k/n_1^s)}\\
&=\frac{(-1)^k}{a}\sum_{n_0\geq 2}\frac{(\log n_0)^{k+1}}{n_0^s}\sum_{l\geq0}\left(\frac{(-1)^k}{a}\sum_{n_1\geq2}\frac{(\log n_1)^k}{n_1^s}\right)^l\\
&=\sum_{\substack{l\geq0\\n_0,\ldots,n_l\geq2}}\frac{(-1)^{k(l+1)}}{a^{l+1}}(\log n_0)^{k+1}(\log n_1\cdots\log n_l)^k\frac{1}{n_0^s\cdots n_l^s}.
\end{align*}
When $a=0$, we have
\begin{align*}
\frac{\zeta^{(k+1)}(s)}{\zeta^{(k)}(s)}&=-\frac{\sum_{n_0\geq 2}(\log n_0)^{k+1}/n_0^s}{(\log 2)^k2^{-s}(1+(\log 2)^{-k}2^{s}\sum_{n_1\geq3}(\log n_1)^k/n_1^s)}\\
&=-\frac{2^s}{(\log 2)^k}\sum_{n_0\geq 2}\frac{(\log n_0)^{k+1}}{n_0^s}\sum_{l\geq0}\left(-\frac{2^s}{(\log 2)^k}\sum_{n_1\geq3}\frac{(\log n_1)^k}{n_1^s}\right)^l\\
&=\sum_{\substack{l\geq0\\n_0\geq2\\n_1,\ldots,n_l\geq3}}\left(\frac{-1}{(\log 2)^k}\right)^{l+1}(\log n_0)^{k+1}(\log n_1\cdots\log n_l)^k\frac{2^{(l+1)s}}{n_0^s\cdots n_l^s}.
\end{align*}
\end{proof}

($Proof$ $of$ $Theorem$ $\ref{2}$) 
For sufficiently large $U,V>0$ and $U>c>1$, by Cauchy's integral formula, we have
\begin{align*}
&\sum_{1<\gamma_a^{(k)}<T}x^{\rho_a^{(k)}}\\
&=\frac{1}{2\pi i}\left(\int_{-U+i}^{-c+i}+\int_{-c+i}^{V+i}+\int_{V+i}^{V+iT}+\int_{V+iT}^{-c+iT}+\int_{-c+iT}^{-U+iT}+\int_{-U+iT}^{-U+i}\right)x^s\frac{\zeta^{(k+1)}(s)}{\zeta^{(k)}(s)-a}ds\\
&=:\frac{1}{2\pi i}(K_1+K_2+K_3+K_4+K_5+K_6),
\end{align*}
say. Note that the constant $c$ is defined in the statement of Lemma \ref{lem1}.

The second term does not depend on $U$ and $T$, so we have $K_2=O(1)$.

Similar to the estimation on $J_3$, by Lemma \ref{LEM3.1} and Lemma \ref{LEM3.2} , the fourth term $K_4$ can be estimated as
\begin{align*}
K_4&=\sum_{|\gamma_a^{(k)}-T|<1}\int_{V+iT}^{-c+iT}x^s\frac{1}{s-\rho_a^{(k)}}ds+O\left(\int_{V+iT}^{-c+iT}x^{s}\log tds\right)\\
&=\sum_{|\gamma_a^{(k)}-T|<1}O(1)+O(\log T)\\
&=O(\log T).
\end{align*}

To estimate the fifth term $K_5$, we apply Lemma \ref{lem1} and we obtain
\begin{align}
\frac{\zeta^{(k+1)}(s)}{\zeta^{(k)}(s)-a}\ll|\log(1-s)|\label{A2}
\end{align}
for $\sigma<1-c$ and $|t|\geq1$. Thus we have
\begin{align*}
K_5&\ll\left|\int_{-c+iT}^{-U+iT}|x^{s}\log (1-s)||ds|\right|\\
&\ll\left|\int_{-c}^{-U}x^{\sigma}\log |1-\sigma-iT|d\sigma\right|\\
&\ll\log T.
\end{align*}
Note that this estimation does not depend on the choice of $U$.

By an argument similar to the above, we can estimate the first term $K_1$ as $K_1=O(1)$, and this estimation also does not depend on the choice of $U$.

By (\ref{A2}), we can estimate the sixth term $K_6$ as
\begin{align*}
K_6&\ll\left|\int_{-U+iT}^{-U+i}x^{-U}|\log(1-s)|ds\right|\\
&\ll x^{-U}T\log|1+U+iT|\\
&\to 0\quad (U\to\infty).
\end{align*}

Finally we estimate the third term $K_3$. In Lemma \ref{LEM4.1}, the right-hand side is too long, so we define $\alpha(d)$ as $\zeta^{(k+1)}(s)/(\zeta^{(k)}(s)-a)=\sum_{d}\alpha(d)d^{-s}$ for brevity. Then we have
\begin{align*}
K_3&=\int_{V+i}^{V+iT}x^s\sum_{d}\alpha(d)d^{-s}ds\\
&=\sum_{d}\alpha(d)\int_{V+i}^{V+iT}\left(\frac{x}{d}\right)^sds\\
&=Ti\alpha(x)+\sum_{d\neq x}\alpha(d)\left[\frac{(x/d)^s}{\log(x/d)}\right]_{V+i}^{V+iT}+O(1)\\
&=Ti\alpha(x)+O(1).
\end{align*}

Combining the estimations of $K_1,\ldots,K_6$, we obtain Theorem \ref{2}.   {\hfill $\square$}


\section{Proof of Theorem \ref{3}}

\begin{lem}\label{LEM5.1}
For $a\neq0$, $U\gg1$ and sufficiently large $T$, we have
\begin{align*}
2\pi \sum_{\substack{T<\gamma_a^{(k)}<T+U\\\beta_a^{(k)}>1/2}}\left(\beta_a^{(k)}-\frac{1}{2}\right)=\int_T^{T+U}\log\left|a-\zeta^{(k)}\left(\frac{1}{2}+it\right)\right|dt-U\log|a|+O(\log T).
\end{align*}
\end{lem}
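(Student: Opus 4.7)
The plan is to apply Littlewood's lemma to $f(s)=\zeta^{(k)}(s)-a$ on the rectangle
$$
R=\{s\in\mathbb{C}\mid 1/2\leq\sigma\leq\sigma_2,\ T\leq t\leq T+U\},
$$
where $\sigma_2$ is taken to grow with $T$, say $\sigma_2=2\log T$. Then the Dirichlet series bound gives $|\zeta^{(k)}(\sigma_2+it)|\ll 2^{-\sigma_2}(\log 2)^k$, while Theorem \ref{thm2} ensures $\sigma_2>E_{2k}(a)$, so that no $a$-point of $\zeta^{(k)}(s)$ lies on or to the right of the right edge of $R$. Before invoking the lemma, I would shift $T$ and $T+U$ each by an amount in $[0,1]$ so that $f$ has no zeros on the two horizontal edges; since $\beta_a^{(k)}-1/2=O(1)$ for every $a$-point in $R$ (again by Theorem \ref{thm2}) and a horizontal unit strip contains only $O(\log T)$ $a$-points by Lemma \ref{LEM3.1}, this shift alters the target sum by at most $O(\log T)$ and can be absorbed into the error.

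Littlewood's lemma then yields
\begin{align*}
2\pi\!\sum_{\substack{T<\gamma_a^{(k)}<T+U\\ \beta_a^{(k)}>1/2}}(\beta_a^{(k)}-1/2)
&=\int_T^{T+U}\log|f(\tfrac12+it)|\,dt-\int_T^{T+U}\log|f(\sigma_2+it)|\,dt\\
&\quad+\int_{1/2}^{\sigma_2}\arg f(\sigma+i(T+U))\,d\sigma-\int_{1/2}^{\sigma_2}\arg f(\sigma+iT)\,d\sigma.
\end{align*}
The first term on the right coincides with the desired main integral $\int_T^{T+U}\log|a-\zeta^{(k)}(\tfrac12+it)|\,dt$. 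For the second, I would write $\log|f(\sigma_2+it)|=\log|a|+\log|1-\zeta^{(k)}(\sigma_2+it)/a|$; the latter summand is $O(2^{-\sigma_2})$ uniformly in $t$, so the whole integral equals $U\log|a|+O(U\cdot 2^{-\sigma_2})=U\log|a|+o(1)$, well inside our $O(\log T)$ error budget.

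The main obstacle is to show that each horizontal argument integral is $O(\log T)$. Split $[1/2,\sigma_2]$ at $C:=E_{2k}(a)+1$. For $\sigma\in[C,\sigma_2]$ one has $|\zeta^{(k)}(\sigma+iT)/a|<1$, so $\arg f(\sigma+iT)=\arg(-a)+O(1)$ and this tail contributes $O(\sigma_2)=O(\log T)$. For $\sigma\in[1/2,C]$ the standard device of Backlund applies: $|\arg f(\sigma+iT)|\leq\pi(n(\sigma)+1)$, where $n(\sigma)$ counts sign changes of $\Re f$ on the horizontal segment from $\sigma+iT$ to $C+iT$. Bounding $n(\sigma)$ by the zero count of a suitable auxiliary entire function via Jensen's formula, together with the polynomial growth estimate $\zeta^{(k)}(s)\ll|t|^{\mu(\sigma)+\varepsilon}$ proved in Lemma \ref{LEM3.1}, gives $n(\sigma)\ll\log T$ uniformly in $\sigma\in[1/2,C]$; integrating against $d\sigma$ over the bounded interval $[1/2,C]$ then yields $O(\log T)$. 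Assembling these estimates with the vertical-integral computations above produces the claimed identity.
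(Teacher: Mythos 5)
Your skeleton is the same as the paper's: Littlewood's lemma on a rectangle with left edge $\sigma=1/2$, the right--edge integral supplying the term $U\log|a|$, and Backlund/Jensen bounds of size $O(\log T)$ for the two horizontal argument integrals, using the polynomial growth estimate from the proof of Lemma \ref{LEM3.1}. The one genuine difference is the right edge. The paper fixes a large constant $C$, applies Littlewood to $(a-\zeta^{(k)}(s))/a$, and then has to work for the right--edge integral: the pointwise bound only gives $O(U2^{-C})$, which is useless for $U$ as large as $T$, so it shifts the vertical integral by Cauchy's theorem onto two horizontal rays $[C,\infty)$, each bounded by $\int_C^{\infty}2^{-\sigma+\varepsilon}d\sigma\ll1$. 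Your choice $\sigma_2=2\log T$ makes the crude pointwise bound sufficient, $U2^{-\sigma_2}=o(1)$, at the price of an extra $O(\sigma_2)=O(\log T)$ from the portion $[C,\sigma_2]$ of the argument integrals, which is within budget; this is a legitimate simplification (for the unrestricted $U\gg1$ in the statement you should take $\sigma_2\asymp\log(T+U)$, an issue the paper's own error terms share).

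Two points need tightening. First, your preliminary shift of the horizontal edges alters not only the sum but also the main integral: you must compare $\int_{T'}^{T'+U'}\log|f(\tfrac12+it)|\,dt$ with $\int_{T}^{T+U}\log|f(\tfrac12+it)|\,dt$. The positive part of $\log|f|$ over the two unit intervals is $O(\log T)$ by the growth estimate, but you also need the lower bound $\int_I\log\bigl|a-\zeta^{(k)}(\tfrac12+it)\bigr|\,dt\gg-\log T$ for unit intervals $I$ near $T$ and $T+U$; this is standard (Jensen's formula centred at a point $\sigma=C$ where $|f|\geq|a|/2$, plus the upper bound $\ll T^{A}$), but you do not address it --- the paper sidesteps the whole issue by not shifting at all. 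Second, in the Backlund step the auxiliary function whose real zeros you count must be bounded away from $0$ at the centre of the Jensen disc; with $f=\zeta^{(k)}-a$ itself one has $\Re f(\sigma+iT)\to-\Re a$ as $\sigma\to\infty$, which vanishes when $a$ is purely imaginary, so ``sign changes of $\Re f$'' is not quite the right object for general $a$. Normalize first, e.g.\ run the device on $G_a=1-\zeta^{(k)}/a$; this is exactly what the paper does with $H_{a,T}(s)=\tfrac12\bigl(G_a(s+iT)+G_{\overline a}(s-iT)\bigr)$, which equals $\Re G_a(\sigma+iT)$ for real $\sigma$ and is $\geq1/2$ at the centre. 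With these two repairs your argument goes through and reproduces the lemma.
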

\begin{proof}
For sufficiently large $C$ and a real number $b$ with $-b<C$, by Littlewood's lemma, we have
\begin{align}
\notag2\pi &\sum_{\substack{T<\gamma_a^{(k)}<T+U\\\beta_a^{(k)}>-b}}\left(\beta_a^{(k)}+b\right)\\
\notag&=\int_T^{T+U}\log\left|\frac{a-\zeta^{(k)}\left(-b+it\right)}{a}\right|dt-\int_T^{T+U}\log\left|\frac{a-\zeta^{(k)}\left(C+it\right)}{a}\right|dt\\
\label{Lit}&+\int_{-b}^C\arg\frac{a-\zeta^{(k)}(\sigma+i(T+U))}{a}d\sigma-\int_{-b}^C\arg\frac{a-\zeta^{(k)}(\sigma+iT)}{a}d\sigma
\end{align}
where we take the logarithmic branch of $\arg (1-\zeta^{(k)}(s)/a)$ as $\arg (1-\zeta^{(k)}(s)/a)\to0$ as $\sigma\to\infty$. We define the function $G_a(s)$ as
\begin{align*}
G_a(s):=\frac{a-\zeta^{(k)}(s)}{a}=1-\frac{\zeta^{(k)}(s)}{a}.
\end{align*}
Since $C$ is sufficiently large, we have $|G_a(s)|>1/2$ for $\sigma\geq C$. Furthermore we define $H_{a,T}(s)$ as
\begin{align*}
H_{a,T}(s):=\frac{G_a(s+iT)+G_{\overline{a}}(s-iT)}{2}.
\end{align*}
Let $n_z'(r)$ denote the number of zeros of $H_{a,T}(s)$  in the circle with center $z$ and radius $r$. Then we have $|\arg G_a(\sigma+iT)|\leq2\pi n_{C+iT}'(C+b)$ for $-b<\sigma<C$. Hence by Jensen's theorem and (\ref{est}), we have
\begin{align*}
|\arg G_a(\sigma+iT)|&\ll\int_0^{C+b+1}\frac{n_{C+iT}'(r)}{r}dr\\
&\ll\max_{0\leq \theta\leq 2\pi}\log\left|H_{a,T}\left(C+iT+\left(C+b+1\right)e^{i\theta}\right)\right|\\
&\ll\log T
\end{align*}
for $-b<\sigma<C$. Hence the fourth term of (\ref{Lit})  can be estimated as
\begin{align*}
\int_{-b}^C\arg\frac{a-\zeta^{(k)}(\sigma+iT)}{a}d\sigma\ll\log T.
\end{align*}
Considering $H_{a,T+U}(s)$, we can also estimate the third term of  (\ref{Lit}) as
\begin{align*}
\int_{-b}^C\arg\frac{a-\zeta^{(k)}(\sigma+i(T+U))}{a}d\sigma\ll\log T.
\end{align*}
Finally, we estimate the second term of  (\ref{Lit}). For $\sigma\geq C$, we have
\begin{align*}
|\zeta^{(k)}(s)|\leq\sum_{n=2}^{\infty}\frac{(\log n)^k}{n^\sigma}\ll2^{-\sigma+\varepsilon}
\end{align*}
for any $\varepsilon>0$. By this inequality and Cauchy's integral formula, we have
\begin{align*}
&\left|\int_T^{T+U}\log\left|\frac{a-\zeta^{(k)}\left(C+it\right)}{a}\right|dt\right|\\
&\leq\left|\int_T^{T+U}\log\frac{a-\zeta^{(k)}\left(C+it\right)}{a}dt\right|\\
&=\left|\int_C^{\infty}\left(\log\left(1-\frac{\zeta^{(k)}\left(\sigma+iT\right)}{a}\right)-\log\left(1-\frac{\zeta^{(k)}\left(\sigma+i(T+U)\right)}{a}\right)\right)d\sigma\right|\\
&\ll\int_C^{\infty}2^{-\sigma+\varepsilon}d\sigma\ll1.
\end{align*}
Hence we obtain
\begin{align}
2\pi \sum_{\substack{T<\gamma_a^{(k)}<T+U\\\beta_a^{(k)}>-b}}\left(\beta_a^{(k)}+b\right)=\int_T^{T+U}\log\left|a-\zeta^{(k)}\left(-b+it\right)\right|dt-U\log|a|+O(\log T).\label{b}
\end{align}
Taking $b=-1/2$, we obtain Lemma \ref{LEM5.1}.
\end{proof}
\begin{lem}\label{LEM5.2}
Let $\alpha>1/2$. For $T^\alpha\leq U\leq T$, we have
\begin{align*}
2\pi \sum_{\substack{T<\gamma_a^{(k)}<T+U\\\beta_a^{(k)}>1/2}}\left(\beta_a^{(k)}-\frac{1}{2}\right)\ll U\log\log T.
\end{align*}
\end{lem}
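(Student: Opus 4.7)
The plan is to invoke Lemma \ref{LEM5.1} to convert the left-hand sum into an integral of $\log|a-\zeta^{(k)}(1/2+it)|$ along the critical line, and then control that integral via Jensen's inequality together with a short-interval mean-square estimate for $\zeta^{(k)}$ on $\Re s=1/2$. By Lemma \ref{LEM5.1}, it suffices to show
$$
\int_T^{T+U}\log\bigl|a-\zeta^{(k)}(1/2+it)\bigr|\,dt \ll U\log\log T,
$$
since $-U\log|a|=O(U)$ and the $O(\log T)$ error is already absorbed in the desired bound.

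Using $\log|z|=\tfrac12\log|z|^2$ together with the concavity of the logarithm, Jensen's inequality applied with the uniform probability measure $dt/U$ on $[T,T+U]$ yields
$$
\int_T^{T+U}\log\bigl|a-\zeta^{(k)}(1/2+it)\bigr|\,dt \leq \frac{U}{2}\log\!\left(\frac{1}{U}\int_T^{T+U}\bigl|a-\zeta^{(k)}(1/2+it)\bigr|^2\,dt\right).
$$
Since $|a-\zeta^{(k)}(1/2+it)|^2 \leq 2|a|^2 + 2|\zeta^{(k)}(1/2+it)|^2$, the problem reduces to a mean-square estimate for $\zeta^{(k)}$ on the short interval $[T,T+U]$.

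The short-interval bound
$$
\int_T^{T+U}\bigl|\zeta^{(k)}(1/2+it)\bigr|^2\,dt \ll U(\log T)^{2k+1}
$$
holds for $U \geq T^{1/2+\varepsilon}$; this is the classical mean square of Ingham extended to derivatives, which may be obtained either by applying Cauchy's integral formula to the standard mean square of $\zeta$ on a narrow strip around the critical line, or directly from the approximate functional equation for $\zeta^{(k)}$. Since $\alpha>1/2$, this is available to us, and substituting into the Jensen bound gives
$$
\int_T^{T+U}\log\bigl|a-\zeta^{(k)}(1/2+it)\bigr|\,dt \leq \frac{U}{2}\log\bigl(O((\log T)^{2k+1})\bigr) \ll U\log\log T,
$$
which combined with Lemma \ref{LEM5.1} yields the claim.

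The main obstacle is securing the short-interval mean-square bound with the correct log-power $(\log T)^{2k+1}$ in the range $U\geq T^{\alpha}$ with $\alpha>1/2$; the essential point is that any polylogarithmic bound suffices here, since after taking $\log$ it collapses to a $\log\log T$ factor. With this estimate in hand, the rest is an immediate combination of Jensen's inequality and Lemma \ref{LEM5.1}.
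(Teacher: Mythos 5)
Your reduction — use Lemma \ref{LEM5.1} to replace the sum by $\int_T^{T+U}\log|a-\zeta^{(k)}(1/2+it)|\,dt$, then pull the logarithm out by Jensen so that any polylogarithmic moment bound collapses to $U\log\log T$ — is exactly the paper's strategy, and your remark that the precise power of $\log T$ is irrelevant is the key observation. Where you genuinely diverge is in which moment you feed into Jensen. You use the honest short-interval second moment $\int_T^{T+U}|\zeta^{(k)}(1/2+it)|^2\,dt\ll U(\log T)^{2k+1}$, whereas the paper deliberately avoids ever needing a moment of $\zeta^{(k)}$ itself: it takes the fractional power $v=2/(4k+1)$, writes $|\zeta^{(k)}|^v=|\zeta^{(k)}/\zeta|^v\,|\zeta|^v$ and applies H\"older with $p=1+1/(4k)$, $q=4k+1$, so the two factors become $\int_T^{T+U}|(\zeta^{(k)}/\zeta)(1/2+it)|^{1/(2k)}dt\ll U\sqrt{\log T}$ (quoted from Ki--Lee) and $\int_T^{T+U}|\zeta(1/2+it)|^2dt\ll U\log T$ (Titchmarsh), both directly citable. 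Your route is conceptually simpler, but its one nonstandard ingredient is precisely the short-interval mean square of $\zeta^{(k)}$: the classical Ingham-type result is for $[0,T]$, and the Cauchy-formula derivation you sketch (circle of radius $\asymp 1/\log T$) still requires the mean square of $\zeta(\sigma+it)$ over $[T,T+U]$ uniformly for $|\sigma-1/2|\ll 1/\log T$, which is true (approximate functional equation plus the Montgomery--Vaughan mean value theorem; the $\chi$-factor is harmless at that distance since $|t|^{1/2-\sigma}\asymp1$) but needs to be carried out or properly cited, not just asserted. So your argument is sound in outline, with this step left at citation level; the paper's H\"older gymnastics exist exactly to sidestep it.

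One genuine gap in coverage: Lemma \ref{LEM5.1} is stated only for $a\neq0$ (and $-U\log|a|$ is meaningless for $a=0$), so your proof as written does not cover the case $a=0$, which Lemma \ref{LEM5.2} includes; the paper disposes of that case separately by invoking Ki and Lee's theorem on zeros of $\zeta^{(k)}$. Add that case (or restrict the claim you prove to $a\neq0$ and quote Ki--Lee for $a=0$), and pin down the short-interval second moment of $\zeta^{(k)}$, and your proof is complete.
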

\begin{proof}
When $a=0$, Lemma \ref{LEM5.2} was already proved by Ki and Lee \cite[Theorem 3]{ki}. Hence we prove Lemma \ref{LEM5.2} in the case $a\neq0$. By Lemma \ref{LEM5.1}, it is enough to prove that there exists a constant $A$ such that
\begin{align}
\int_T^{T+U}\log\left|a-\zeta^{(k)}\left(\frac{1}{2}+it\right)\right|dt\leq AU\log\log T.\label{AUlogT}
\end{align}
By the triangle inequality, we have
\begin{align*}
&\int_T^{T+U}\log\left|a-\zeta^{(k)}\left(\frac{1}{2}+it\right)\right|dt\\
&\leq\int_T^{T+U}\log\left(|a|+\left|\zeta^{(k)}\left(\frac{1}{2}+it\right)\right|\right)dt\\
&\leq\frac{1}{v}\int_T^{T+U}\log\max\left(|a|^v,\left|\zeta^{(k)}\left(\frac{1}{2}+it\right)\right|^v\right)dt+U\log 2
\end{align*}
for positive $v$. By Jensen's inequality, we have
\begin{align*}
&\int_T^{T+U}\log\max\left(|a|^v,\left|\zeta^{(k)}\left(\frac{1}{2}+it\right)\right|^v\right)dt\\
&\leq U\log\left(\frac{1}{U}\int_T^{T+U}\max\left(|a|^v,\left|\zeta^{(k)}\left(\frac{1}{2}+it\right)\right|^v\right)dt\right)\\
&\leq U\log\left(\frac{1}{U}\int_T^{T+U}\left(|a|^v+\left|\zeta^{(k)}\left(\frac{1}{2}+it\right)\right|^v\right)dt\right)\\
&\leq U\log\left(|a|^v+\frac{1}{U}\int_T^{T+U}\left|\zeta^{(k)}\left(\frac{1}{2}+it\right)\right|^vdt\right).
\end{align*}
For positive $p,q$ with $1/p+1/q=1$, by H\"older's inequality, we have
\begin{align*}
&\int_T^{T+U}\left|\zeta^{(k)}\left(\frac{1}{2}+it\right)\right|^vdt\\
&\qquad\leq \left(\int_T^{T+U}\left|\frac{\zeta^{(k)}}{\zeta}\left(\frac{1}{2}+it\right)\right|^{pv}dt\right)^{1/p}\left(\int_T^{T+U}\left|\zeta\left(\frac{1}{2}+it\right)\right|^{qv}dt\right)^{1/q}.
\end{align*}
We put $v=2/(4k+1),\ p=1+1/(4k)$ and $q=4k+1$, then we have $pv=1/(2k),\ qv=2$ and $1/p+1/q=1$. By \cite[Claim]{ki} and \cite[Theorem 7.4]{ti}, we have
\begin{align*}
\int_T^{T+U}\left|\frac{\zeta^{(k)}}{\zeta}\left(\frac{1}{2}+it\right)\right|^{1/(2k)}dt\ll U\sqrt{\log T}
\end{align*}
and
\begin{align*}
\int_T^{T+U}\left|\zeta\left(\frac{1}{2}+it\right)\right|^2dt\ll U\log T.
\end{align*}
Hence we have
\begin{align*}
&\int_T^{T+U}\left|\zeta^{(k)}\left(\frac{1}{2}+it\right)\right|^vdt\ll \left(U\sqrt{\log T}\right)^{1/p}\left(U\log T\right)^{1/q}\ll U\log T.
\end{align*}
Thus we obtain (\ref{AUlogT}).
\end{proof}
\begin{lem}\label{LEM5.3}
Let $\alpha>1/2$. For $T^\alpha\leq U\leq T$, we have
\begin{align*}
N_k^{(1)}(a;T,T+U)=O\left(\frac{U\log T}{\log\log T}\right).
\end{align*}
\end{lem}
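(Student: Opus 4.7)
The plan is to deduce this density estimate directly from the second-moment-type bound just established in Lemma \ref{LEM5.2}. The key observation is that every $a$-point counted by $N_k^{(1)}(a;T,T+U)$ contributes a quantitatively large amount to the sum $\sum (\beta_a^{(k)} - 1/2)$, so an upper bound on that sum translates into an upper bound on the count.

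More precisely, first I would note that each $a$-point $\rho_a^{(k)} = \beta_a^{(k)} + i\gamma_a^{(k)}$ counted by $N_k^{(1)}(a;T,T+U)$ satisfies
\begin{equation*}
\beta_a^{(k)} - \frac{1}{2} > \frac{(\log\log T)^2}{\log T}.
\end{equation*}
Hence, summing this inequality over the $N_k^{(1)}(a;T,T+U)$ many such points and then enlarging the summation range to include all $a$-points in the strip $\beta_a^{(k)} > 1/2$ with $T < \gamma_a^{(k)} < T+U$, I obtain
\begin{equation*}
\frac{(\log\log T)^2}{\log T}\, N_k^{(1)}(a;T,T+U) \le \sum_{\substack{T<\gamma_a^{(k)}<T+U\\ \beta_a^{(k)}>1/2+(\log\log T)^2/\log T}}\!\!\left(\beta_a^{(k)}-\tfrac{1}{2}\right) \le \sum_{\substack{T<\gamma_a^{(k)}<T+U\\ \beta_a^{(k)}>1/2}}\!\!\left(\beta_a^{(k)}-\tfrac{1}{2}\right).
\end{equation*}

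Next, I would apply Lemma \ref{LEM5.2} to bound the right-hand side by $O(U\log\log T)$ (valid for $T^\alpha \le U \le T$, and, by the remark at the start of the proof of Lemma \ref{LEM5.2}, in both cases $a\neq 0$ and $a=0$). Rearranging gives
\begin{equation*}
N_k^{(1)}(a;T,T+U) \ll \frac{U\log\log T \cdot \log T}{(\log\log T)^2} = \frac{U\log T}{\log\log T},
\end{equation*}
which is exactly the stated estimate.

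There is no real obstacle here: the whole weight of the argument sits in Lemma \ref{LEM5.2}, whose proof already relied on Littlewood's lemma, Jensen's inequality and the mean value estimates for $\zeta^{(k)}/\zeta$ and $\zeta$ on the critical line. Once Lemma \ref{LEM5.2} is in hand, Lemma \ref{LEM5.3} follows by the standard Chebyshev-type trick of comparing the count with a moment in which each counted point is weighted by a quantity bounded below.
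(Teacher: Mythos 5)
Your proof is correct and is essentially identical to the paper's argument: both bound the count from below by the weighted sum over $a$-points with $\beta_a^{(k)}-1/2>(\log\log T)^2/\log T$, enlarge the range to $\beta_a^{(k)}>1/2$, and apply Lemma \ref{LEM5.2} to get $N_k^{(1)}(a;T,T+U)\ll U\log T/\log\log T$. Nothing is missing.
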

\begin{proof}
By Lemma \ref{LEM5.2}, we have
\begin{align*}
\sum_{\substack{T<\gamma_a^{(k)}<T+U\\ \beta_a^{(k)}>1/2+(\log\log T)^2/\log T}}\left(\beta_a^{(k)}-\frac{1}{2}\right)
\leq\sum_{\substack{T<\gamma_a^{(k)}<T+U\\ \beta_a^{(k)}>1/2}}\left(\beta_a^{(k)}-\frac{1}{2}\right)
\ll U\log\log T.
\end{align*}
On the other hand, we have
\begin{align*}
\sum_{\substack{T<\gamma_a^{(k)}<T+U\\ \beta_a^{(k)}>1/2+(\log\log T)^2/\log T}}\left(\beta_a^{(k)}-\frac{1}{2}\right)
\geq N_k^{(1)}(a;T,T+U)\frac{(\log\log T)^2}{\log T}.
\end{align*}
Combining these two inequalities, we obtain Lemma \ref{LEM5.3}.
\end{proof}
\begin{lem}\label{LEM5.4}
Let $a\neq0$ and $\alpha>1/2$. For sufficiently large $b$, $T$ and $T^\alpha\leq U\leq T$, we have
\begin{align*}
&2\pi\sum_{T<\gamma_a^{(k)}<T+U}\left(\beta_a^{(k)}+b\right)\\
&\qquad=\left(\frac{1}{2}+b\right)\left\{(T+U)\log\frac{T+U}{2\pi}-T\log\frac{T}{2\pi}-U\right\}\\
&\qquad+k\left\{(T+U)\log\log(T+U)-T\log\log T\right\}-U\log |a|+O\left(\frac{U}{\log T}\right).
\end{align*}
\end{lem}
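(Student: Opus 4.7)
The plan is to refine the argument used in Lemma \ref{LEM5.1} by taking $b$ large enough that every $a$-point contributing to the sum lies to the right of $\sigma=-b$, and then evaluating $\int_T^{T+U}\log|a-\zeta^{(k)}(-b+it)|\,dt$ asymptotically by combining Lemma \ref{lem1} with Stirling's formula.

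By Theorem \ref{thm2}, every $a$-point of $\zeta^{(k)}(s)$ with $|t|\geq 1$ lies in $E_{1k}(a)\leq\sigma\leq E_{2k}(a)$. Choose $b$ so large that $-b<E_{1k}(a)$ and $1+b>c$, where $c$ is the constant from Lemma \ref{lem1}. Then the restriction $\beta_a^{(k)}>-b$ appearing in equation (\ref{b}) (the intermediate identity established inside the proof of Lemma \ref{LEM5.1}) holds automatically for every $a$-point with $T<\gamma_a^{(k)}<T+U$, and we obtain
$$2\pi\sum_{T<\gamma_a^{(k)}<T+U}(\beta_a^{(k)}+b)=\int_T^{T+U}\log|a-\zeta^{(k)}(-b+it)|\,dt-U\log|a|+O(\log T).$$

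Next I apply Lemma \ref{lem1} at $s=1+b-it$, so that $1-s=-b+it$ and $\sigma=1+b>c$. Taking logarithms and using Stirling's formula, together with the elementary asymptotics
\begin{align*}
\log|\cos(\pi(1+b-it)/2)|&=\pi t/2-\log 2+O(e^{-\pi t}),\\
\log|\log(1+b-it)|&=\log\log t+O(1/\log^2 t),\\
\log|\zeta(1+b-it)|&=O(2^{-b}),
\end{align*}
the two $\pm\pi t/2$ contributions cancel, and the constant $(1+b)$ arising from $\Re[(s-\tfrac12)\log s]$ cancels against the $-\Re s=-(1+b)$ in Stirling's expansion. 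This yields
$$\log|\zeta^{(k)}(-b+it)|=\left(\tfrac12+b\right)\log t+k\log\log t-\left(\tfrac12+b\right)\log(2\pi)+O(1/\log t).$$
Since $|\zeta^{(k)}(-b+it)|\gg t^{1/2+b}(\log t)^k\gg|a|$ for large $t$, the difference $\log|a-\zeta^{(k)}(-b+it)|-\log|\zeta^{(k)}(-b+it)|$ is $O(t^{-(1/2+b)}(\log t)^{-k})$, which contributes $O(1)$ after integration.

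To finish I integrate term by term, using $\int_T^{T+U}\log t\,dt=(T+U)\log(T+U)-T\log T-U$ and $\int_T^{T+U}\log\log t\,dt=(T+U)\log\log(T+U)-T\log\log T+O(U/\log T)$, while $\int_T^{T+U}(1/\log t)\,dt=O(U/\log T)$. Rewriting $(\tfrac12+b)\{(T+U)\log(T+U)-T\log T-U\}-(\tfrac12+b)U\log(2\pi)$ in the form $(\tfrac12+b)\{(T+U)\log\tfrac{T+U}{2\pi}-T\log\tfrac{T}{2\pi}-U\}$ and substituting back into the displayed identity above produces the claimed formula; the leftover $O(\log T)$ is absorbed into $O(U/\log T)$ because $U\geq T^\alpha$ with $\alpha>1/2$. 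The main technical obstacle is verifying the two cancellations in the Stirling expansion (the $\pm\pi t/2$ terms and the $\pm(1+b)$ constants) and checking that the $O(1/|\log s|)$ error carried by Lemma \ref{lem1} genuinely integrates to at most $O(U/\log T)$ rather than to a larger quantity.
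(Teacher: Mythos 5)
Your overall route is the same as the paper's: you start from the intermediate identity (\ref{b}) with $b$ so large that the condition $\beta_a^{(k)}>-b$ is vacuous, and you evaluate $\int_T^{T+U}\log|a-\zeta^{(k)}(-b+it)|\,dt$ by feeding Lemma \ref{lem1} into Stirling's formula (the paper packages the $\Gamma$, cosine and $(2\pi)^{-s}$ factors into $\chi(s)$ and quotes Levinson's computation of $\int\log|\chi(-b+it)|\,dt$, which is the same calculation). Your cancellations of the $\pm\pi t/2$ terms and of the $\pm(1+b)$ constants are correct, as is the treatment of $\log|a-\zeta^{(k)}|-\log|\zeta^{(k)}|$ via the lower bound $|\zeta^{(k)}(-b+it)|\gg t^{1/2+b}$, and the $O(1/|\log s|)$ factor from Lemma \ref{lem1} does integrate to $O(U/\log T)$.

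There is, however, a genuine gap in your handling of the factor $\zeta(1+b-it)$. You bound $\log|\zeta(1+b-it)|=O(2^{-b})$ pointwise and then fold it into the error of your displayed asymptotic for $\log|\zeta^{(k)}(-b+it)|$, whose error term is $O(1/\log t)$. But $\log|\zeta(1+b-it)|$ is essentially $2^{-1-b}\cos(t\log 2)+\cdots$, an oscillating quantity of constant size: it is not $O(1/\log t)$, and integrating the pointwise bound only gives a contribution $O(2^{-b}U)=O(U)$, since $b$ is a fixed constant. That is larger than the error term $O(U/\log T)$ claimed in the lemma, so your argument as written does not deliver the stated result. The fix is exactly the device the paper uses for $L_3$ (the same one used for the second term of (\ref{Lit}) in Lemma \ref{LEM5.1}): exploit the oscillation, e.g.\ write $\log\zeta(1+b-it)=\sum_{n\geq2}\Lambda(n)(\log n)^{-1}n^{-(1+b)+it}$ and integrate termwise using $\int_T^{T+U}n^{it}\,dt\ll1/\log n$ (equivalently, shift the integral to a contour going to $\sigma=+\infty$), which shows
\begin{align*}
\int_T^{T+U}\log|\zeta(1+b-it)|\,dt=O(1),
\end{align*}
after which your term-by-term integration gives the lemma with the claimed error $O(U/\log T)$.
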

\begin{proof}
We use (\ref{b}). The integrand can be calculated as
\begin{align}
\log\left|a-\zeta^{(k)}\left(-b+it\right)\right|=\log\left|\zeta^{(k)}\left(-b+it\right)\right|+\log\left|1-\frac{a}{\zeta^{(k)}\left(-b+it\right)}\right|.\label{b1}
\end{align}
By Lemma \ref{lem1}, the first term on the right-hand side of (\ref{b1}) is decomposed as
\begin{align*}
&\log\left|\zeta^{(k)}\left(-b+it\right)\right|\\
&=\log|\chi(-b+it)|+k\log|\log(1+b-it)|+\log|\zeta(1+b-it)|+O\left(\frac{1}{\log t}\right)
\end{align*}
where $\chi(s)=2^s\pi^{-1+s}\sin(\pi s/2)\Gamma(1-s)$. By (\ref{hyoka}), the second term on the right-hand side of (\ref{b1}) is estimated as
\begin{align*}
\log\left|1-\frac{a}{\zeta^{(k)}\left(-b+it\right)}\right|
\ll|t|^{-3/2}.
\end{align*}
Thus (\ref{b1}) can be calculated as
\begin{align*}
&\log\left|a-\zeta^{(k)}\left(-b+it\right)\right|\\
&=\log|\chi(-b+it)|+k\log|\log(1+b-it)|+\log|\zeta(1+b-it)|+O\left(\frac{1}{\log t}\right).
\end{align*}
Substituting this result into (\ref{b}), we have
\begin{align*}
&2\pi \sum_{\substack{T<\gamma_a^{(k)}<T+U\\\beta_a^{(k)}>-b}}\left(\beta_a^{(k)}+b\right)\\
&=\int_T^{T+U}\log|\chi(-b+it)|dt+k\int_T^{T+U}\log|\log(1+b-it)|dt\\
&+\int_T^{T+U}\log|\zeta(1+b-it)|dt+\int_T^{T+U}O\left(\frac{1}{\log t}\right)dt-U\log|a|+O(\log T)\\
&=:L_1+L_2+L_3+L_4-U\log|a|+O(\log T),
\end{align*}
say. For the first term $L_1$, by the equation written on the left side of \cite[p.1323]{le}, we have
\begin{align*}
L_1&=\int_T^{T+U}\left(\frac{1}{2}+b\right)\log\left|\frac{t}{2\pi}\right|dt+\int_T^{T+U}O\left(\frac{1}{t}\right)dt\\
&=\left(\frac{1}{2}+b\right)\left\{(T+U)\log\frac{T+U}{2\pi}-T\log\frac{T}{2\pi}-U\right\}+O(\log T).
\end{align*}
Next we consider the second term $L_2$. We have
\begin{align*}
L_2&=k\int_T^{T+U}\log\left|\log t+\log\left(-i+\frac{1+b}{t}\right)\right|dt\\
&=k\int_T^{T+U}\log|\log t+O(1)|dt\\
&=k\int_T^{T+U}\log\log tdt+\int_T^{T+U}O\left(\frac{1}{\log t}\right)dt\\
&=k(T+U)\log\log(T+U)-kT\log\log T+O\left(\frac{U}{\log T}\right).
\end{align*}
Similar to the estimation of the second term of  (\ref{Lit}), we can estimate $L_3=O(1)$. Furthermore we can easily estimate $L_4=O(U/\log T)$.
\end{proof}
\begin{lem}\label{LEM5.5}
For sufficiently large $b$, $T$ and $T^\alpha\leq U\leq T$, we have
\begin{align*}
&2\pi\sum_{T<\gamma_0^{(k)}<T+U}\left(\beta_0^{(k)}+b\right)\\
&\qquad=\left(\frac{1}{2}+b\right)\left\{(T+U)\log\frac{T+U}{2\pi}-T\log\frac{T}{2\pi}\right\}\\
&\qquad+k\left\{(T+U)\log\log(T+U)-T\log\log T\right\}\\
&\qquad-U\left(\frac{1}{2}+b+b\log2+k\log\log2\right)+O\left(\frac{U}{\log T}\right).
\end{align*}
\end{lem}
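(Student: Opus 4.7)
The plan is to mirror the proof of Lemma \ref{LEM5.4}, but with a different normalization, since for $a=0$ the function $G_a(s)=(a-\zeta^{(k)}(s))/a$ used in Lemma \ref{LEM5.1} is undefined. Define instead
$$f(s) := \frac{2^s}{(-\log 2)^k}\,\zeta^{(k)}(s),$$
so that $f(s) = 1 + O((2/3)^\sigma)$ as $\sigma\to\infty$ and $f$ has the same zeros (with multiplicities) as $\zeta^{(k)}$. Because $\zeta^{(k)}$ has real Dirichlet coefficients, $f(\bar s)=\overline{f(s)}$, and the symmetrization $H_T(s):=(f(s+iT)+f(s-iT))/2$ is real on $t=0$ with $H_T(C)$ close to $1$ for large $C$; Jensen's theorem together with the estimate (\ref{est}) then give argument bounds of size $O(\log T)$ on the horizontal sides of the rectangle, exactly as in Lemma \ref{LEM5.1}.

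Fix $b$ large enough that, by Theorem \ref{thm2}, every zero with $T<\gamma_0^{(k)}<T+U$ lies in $\sigma>-b$, and apply Littlewood's lemma to $f$ over the rectangle $[-b,C]\times[T,T+U]$ for sufficiently large $C$. The right vertical integral $\int_T^{T+U}\log|f(C+it)|\,dt$ is $O(1)$ by the same Cauchy-integral-formula argument used in Lemma \ref{LEM5.1} (with $|f(\sigma+it)-1|\ll(2/3)^\sigma$ replacing $|\zeta^{(k)}(\sigma+it)|\ll 2^{-\sigma+\varepsilon}$). Combined with the $O(\log T)$ argument bounds this yields
$$2\pi \sum_{T<\gamma_0^{(k)}<T+U}(\beta_0^{(k)}+b) = \int_T^{T+U} \log|f(-b+it)|\,dt + O(\log T).$$
Since $|f(-b+it)|=2^{-b}(\log 2)^{-k}|\zeta^{(k)}(-b+it)|$, this equals
$$\int_T^{T+U} \log|\zeta^{(k)}(-b+it)|\,dt - U\bigl(b\log 2 + k\log\log 2\bigr) + O(\log T).$$

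The remaining integral is precisely the quantity handled (via Lemma \ref{lem1}) in the proof of Lemma \ref{LEM5.4}, where it was decomposed into $L_1+L_2+L_3+L_4$ and shown to equal
$$\left(\tfrac{1}{2}+b\right)\!\left\{(T+U)\log\tfrac{T+U}{2\pi} - T\log\tfrac{T}{2\pi} - U\right\} + k\bigl\{(T+U)\log\log(T+U) - T\log\log T\bigr\} + O\!\left(\tfrac{U}{\log T}\right).$$
Substituting this, combining $-U(\tfrac{1}{2}+b)-U(b\log 2+k\log\log 2)=-U(\tfrac{1}{2}+b+b\log 2+k\log\log 2)$, and absorbing $O(\log T)$ into $O(U/\log T)$ (which is valid because $U\geq T^\alpha$ with $\alpha>1/2$), yields the claimed formula.

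The main conceptual step, and the only genuine departure from Lemma \ref{LEM5.4}, is the choice of normalization: without pulling out the leading Dirichlet-series term $(-\log 2)^k 2^{-s}$, the right vertical Littlewood integral cannot be made $o(1)$ in $C$, and the cancellation of a linear $-UC\log 2$ contribution would be opaque. With the normalization in place, its net effect is exactly to convert the $-U\log|a|$ term of Lemma \ref{LEM5.4} into the new $-U(b\log 2+k\log\log 2)$ appearing in Lemma \ref{LEM5.5}.
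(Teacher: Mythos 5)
Your proof is correct, but it takes a different route from the paper: the paper's entire proof of Lemma \ref{LEM5.5} is a one-line appeal to Levinson--Montgomery \cite[Lemma 3.1]{lm} (their Littlewood-lemma asymptotic for $\zeta^{(k)}$ over $0<\gamma\le T$, which one then differences at $T+U$ and $T$), whereas you rebuild that computation inside the paper's own framework. Your normalization $f(s)=2^s(-\log 2)^{-k}\zeta^{(k)}(s)$ is exactly the right replacement for the division by $a$ in $G_a(s)$: it makes $f\to 1$ as $\sigma\to\infty$, so the right vertical Littlewood integral is $O(1)$ by the same contour-shift argument as in Lemma \ref{LEM5.1}, the symmetrized $H_T$ together with Jensen and (\ref{est}) gives the $O(\log T)$ bounds on the horizontal sides, and the left vertical integral reduces to $\int_T^{T+U}\log|\zeta^{(k)}(-b+it)|\,dt$ minus the explicit constant $U(b\log 2+k\log\log 2)$; that integral is indeed the $L_1+L_2+L_3+L_4$ evaluation already carried out in Lemma \ref{LEM5.4}, and the bookkeeping of the constants and of $O(\log T)\ll U/\log T$ (using $U\ge T^\alpha$, $\alpha>1/2$) is right. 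In substance this is the same method as the cited Levinson--Montgomery lemma (they too normalize by the leading Dirichlet term $(-\log 2)^k2^{-s}$), so conceptually you have reconstructed their proof rather than found a new one; what your version buys is self-containedness and transparency --- in particular it shows exactly where the $-U(b\log 2+k\log\log 2)$ term comes from and keeps the error localized to the window $[T,T+U]$ --- while the paper's citation buys brevity. Minor points to polish, none of them gaps: state that $T$ and $T+U$ can be adjusted (or chosen generically) so that no zero lies on the horizontal edges before applying Littlewood's lemma, and note that $b$ must also satisfy $1+b>c$ so that Lemma \ref{lem1} (hence the $L_1$--$L_4$ evaluation at $\sigma=-b$) applies.
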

\begin{proof}
From \cite[Lemma 3.1]{lm}, we can easily obtain this result.
\end{proof}
\begin{lem}\label{LEM5.6}
For $T^\alpha\leq U\leq T$, any complex number $a$ and any positive integer $k$, we have
\begin{align*}
&N_k(a;T,T+U)=\begin{cases}
\displaystyle\frac{T+U}{2\pi}\log\frac{T+U}{2\pi}-\frac{T}{2\pi}\log\frac{T}{2\pi}-\frac{U}{2\pi}+O(\log T)&(a\neq0),\\
\\
\displaystyle\frac{T+U}{2\pi}\log\frac{T+U}{4\pi}-\frac{T}{2\pi}\log\frac{T}{4\pi}-\frac{U}{2\pi}+O(\log T)&(a=0).
\end{cases}
\end{align*}
\end{lem}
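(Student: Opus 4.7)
The plan is to reduce the short-interval count to a difference of two long-interval counts, so that Lemma \ref{LEM5.6} falls out of Theorem \ref{1} (for $a\neq0$) and Berndt's formula \eqref{berndt} (for $a=0$) by a one-line subtraction. Concretely, first I would write
$$N_k(a;T,T+U)=N_k(a;1,T+U)-N_k(a;1,T)+O(\log T),$$
where the $O(\log T)$ adjustment absorbs the single-ordinate discrepancy at $\gamma_a^{(k)}=T$ arising from the strict inequalities in the definitions: the number of $a$-points with $\gamma_a^{(k)}=T$ is bounded by Lemma \ref{LEM3.1} applied to the unit interval around $T$.

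For $a\neq0$, substituting Theorem \ref{1} at heights $T$ and $T+U$ and collecting terms gives
$$N_k(a;T,T+U)=\frac{T+U}{2\pi}\log\frac{T+U}{2\pi}-\frac{T}{2\pi}\log\frac{T}{2\pi}-\frac{U}{2\pi}+O(\log T),$$
where the cancellation $-(T+U)/(2\pi)+T/(2\pi)=-U/(2\pi)$ produces the linear-in-$U$ term, and the hypothesis $U\le T$ guarantees $\log(T+U)=O(\log T)$ so the combined remainder remains $O(\log T)$. For $a=0$, the identical argument applies with \eqref{berndt} in place of Theorem \ref{1}; the only change is that each main term carries $\log(T/4\pi)$ in place of $\log(T/2\pi)$, which propagates to the corresponding $\log((T+U)/4\pi)$, $\log(T/4\pi)$ factors in the claimed formula.

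I do not expect any serious obstacle: both ingredients (Theorem \ref{1} and Berndt's asymptotic, together with Lemma \ref{LEM3.1} for the boundary control) are already established in the paper, and the subtraction is purely algebraic. It is worth noting that one could alternatively try to extract Lemma \ref{LEM5.6} directly from Lemma \ref{LEM5.4}/\ref{LEM5.5} by comparing the sums $\sum(\beta_a^{(k)}+b)$ for two values of $b$; this would yield only an $O(U/\log T)$ remainder, which is weaker than the stated $O(\log T)$ when $U$ is close to $T$. The subtraction route through Theorem \ref{1} is therefore both simpler and sharper, and is what I would use.
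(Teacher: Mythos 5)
Your proposal is correct and matches the paper's own proof, which simply derives the lemma by subtracting the long-interval counts from Theorem \ref{1} (for $a\neq0$) and Berndt's formula (\ref{berndt}) (for $a=0$); your extra remark on the boundary ordinates via Lemma \ref{LEM3.1} is a harmless refinement the paper leaves implicit.
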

\begin{proof}
From Theorem \ref{1} and (\ref{berndt}), we can easily obtain this result.
\end{proof}

($Proof$ $of$ $Theorem$ $\ref{3}$) 
By Lemma \ref{LEM5.3}, $N_k^{(1)}(a;T,T+U)$ is already estimated. Hence we estimate $N_k^{(2)}(a;T,T+U)$ and $N_k^{(3)}(a;T,T+U)$. First we decompose the summation as
\begin{align*}
&2\pi \sum_{T<\gamma_a^{(k)}<T+U}\left(\beta_a^{(k)}+b\right)\\
&=2\pi \sum_{\substack{T<\gamma_a^{(k)}<T+U\\\beta_a^{(k)}>1/2+(\log\log T)^2/\log T}}\left\{\left(\beta_a^{(k)}-\frac{1}{2}\right)+\left(b+\frac{1}{2}\right)\right\}\\
&+2\pi \sum_{\substack{T<\gamma_a^{(k)}<T+U\\1/2-(\log\log T)^2/\log T\leq\beta_a^{(k)}\leq1/2+(\log\log T)^2/\log T}}\left\{\left(\beta_a^{(k)}-\frac{1}{2}\right)+\left(b+\frac{1}{2}\right)\right\}\\
&+2\pi \sum_{\substack{T<\gamma_a^{(k)}<T+U\\\beta_a^{(k)}<1/2-(\log\log T)^2/\log T}}\left(\beta_a^{(k)}+b\right).
\end{align*}
From Lemma \ref{LEM5.2}, we have
\begin{align*}
2\pi \sum_{T<\gamma_a^{(k)}<T+U}\left(\beta_a^{(k)}+b\right)&\leq O(U\log\log T)+2\pi \left(b+\frac{1}{2}\right)N_k^{(1)}(a;T,T+U)\\
&+2\pi \left(b+\frac{1}{2}\right)N_k^{(3)}(a;T,T+U)\\
&+2\pi \left(b+\frac{1}{2}-\frac{(\log\log T)^2}{\log T}\right)N_k^{(2)}(a;T,T+U).
\end{align*}
Since
\begin{align}
N_k^{(1)}(a;T,T+U)+N_k^{(3)}(a;T,T+U)= N_k(a;T,T+U)-N_k^{(2)}(a;T,T+U),\label{Rel}
\end{align}
we have
\begin{align*}
&2\pi \sum_{T<\gamma_a^{(k)}<T+U}\left(\beta_a^{(k)}+b\right)\\
&\qquad\leq 2\pi \left(b+\frac{1}{2}\right)N_k(a;T,T+U)\\
&\qquad-2\pi \frac{(\log\log T)^2}{\log T}N_k^{(2)}(a;T,T+U)+O(U\log\log T).
\end{align*}
By Lemma \ref{LEM5.6}, we have
\begin{align}
\notag&2\pi \sum_{T<\gamma_a^{(k)}<T+U}\left(\beta_a^{(k)}+b\right)\\
\notag&\qquad\leq \left(b+\frac{1}{2}\right)\left\{(T+U)\log(T+U)-T\log T\right\}\\
&\qquad-2\pi \frac{(\log\log T)^2}{\log T}N_k^{(2)}(a;T,T+U)+O(U\log\log T).\label{No1}
\end{align}
By Lemma \ref{LEM5.4} and Lemma \ref{LEM5.5}, the left-hand side is
\begin{align}
\notag&2\pi\sum_{T<\gamma_a^{(k)}<T+U}\left(\beta_a^{(k)}+b\right)\\
\notag&\qquad=\left(\frac{1}{2}+b\right)\left\{(T+U)\log(T+U)-T\log T\right\}\\
\label{No2}&\qquad+k\left\{(T+U)\log\log(T+U)-T\log\log T\right\}+O\left(U\right).
\end{align}
From (\ref{No1}) and (\ref{No2}), we have
\begin{align*}
&N_k^{(2)}(a;T,T+U)\\
&\quad\leq-\frac{k\log T}{2\pi(\log\log T)^2}\left\{(T+U)\log\log(T+U)-T\log\log T\right\}+O\left(U\frac{\log T}{\log\log T}\right)\\
&\quad\leq O\left(U\frac{\log T}{\log\log T}\right).
\end{align*}
Hence we have
\begin{align*}
&N_k^{(2)}(a;T,T+U)= O\left(U\frac{\log T}{\log\log T}\right).
\end{align*}

The estimation of $N_k^{(3)}(a;T,T+U)$ is given from (\ref{Rel}) and the estimations of $N_k^{(1)}(a;T,T+U)$ and $N_k^{(2)}(a;T,T+U)$. {\hfill $\square$}


Toyota Technological Institute\\
 2-12-1 Hisakata, Tempaku-ku, Nagoya 468-8511\\
 Japan\\
E-mail: onozuka@toyota-ti.ac.jp

\end{document}